\def\be{\begin{eqnarray}}
\def\ee{\end{eqnarray}}
\def\bea{\begin{eqnarray*}}
\def\eea{\end{eqnarray*}}
\def\na{\nabla}
\def\vp{\varphi}
\def\r{\rho}
\def\a{\alpha}
\def\n{\nabla}
\def\o{\omega}
\newtheorem{defi}{Definition}[section]
\newtheorem{lem}[defi]{Lemma}
\newtheorem{remark}[defi]{Remark}
\newtheorem{thm}[defi]{Theorem}
\newtheorem{prop}[defi]{Proposition}
\newtheorem{cor}[defi]{Corollary}
\numberwithin{equation}{section}
\begin{document}

\title[Ridigity of Ricci Solitons with Weakly Harmonic Weyl Tensors]
{ Ridigity of Ricci Solitons with Weakly Harmonic Weyl Tensors}

\author[S. Hwang]{Seungsu Hwang}
\thanks{The first author was supported by the Ministry of Education (NRF-2015R1D1A1A01057661).}

\address{Department of Mathematics\\
Chung-Ang University\\ 221 HeukSuk-dong, DongJak-ku\\ Seoul, Korea 156-756}
\email{seungsu@cau.ac.kr}

\author[G. Yun]{Gabjin Yun$^*$}
\thanks{$^*$ The second author is the corresponding author.
This work was supported by the Basic Science Research Program through the National Research Foundation
of Korea (NRF) funded by the Ministry of Education, Science and Technology (Grant No. 2011-0007465).}

\address{Department of Mathematics\\
Myong Ji University\\
San 38-2, Nam-dong, Yongin-si\\
Gyeonggi-do, Korea, 449-728}
\email{gabjin@mju.ac.kr}

\keywords{gradient shrinking Ricci soliton, harmonic Weyl curvature tensor, weakly harmonic Weyl curvature tensor,  Einstein metric, scalar curvature}
\subjclass{53C25}

\begin{abstract}
In this paper, we prove rigidity results on gradient shrinking Ricci solitons with weakly harmonic Weyl curvature tensors. Let $(M^n, g)$ be a compact gradient shrinking Ricci soliton satisfying
${\rm Ric}_g + Ddf = \rho g$ with $\rho >0$ constant. We show that if $(M,g)$ satisfies
$\delta \mathcal W (\cdot, \cdot, \nabla f) = 0$, then $(M, g)$ is Einstein. 
Here $\mathcal W$ denotes the Weyl curvature tensor. In the case of noncompact, if $M$ is complete and satisfies the same condition, then $M$ is rigid in the sense that $M$ is given by a quotient of product of an Einstein manifold with Euclidean space. These are generalizations of the previous known results in
 \cite{l-r}, \cite{m-s} and \cite{p-w3}.
 \end{abstract}

\maketitle


\section{Introduction}

\setlength{\baselineskip}{22pt}

A complete Riemannian metric $g$ on a smooth manifold $M^n$ is called a {\it Ricci soliton} if there exist a constant
$\r$ and a smooth $1$-form $\o$ such that
\be
2r_g + {\mathcal L}_{\o^\sharp} g = 2 \rho g,\label{eqn1-1-1}
\ee
where $r_g$ is the Ricci tensor of the metric $g$, $\o^\sharp$ is the vector field  that is dual to $\o$,
 and ${\mathcal L}_{\o^\sharp}$ denotes the Lie derivative along $\o^\sharp$.
Since
$$
\mathcal L_{\o^\sharp} g(X, Y) = D_X \o(Y) + D_Y\o(X)$$
 for any vector fields $X$ and $Y$,
$(\ref{eqn1-1-1})$ is equivalent to
\be
2r_g(X, Y) + D_X \o(Y) + D_Y\o(X) = 2 \rho g(X, Y).\label{eqn1-1-2}
\ee
Moreover, if there exists a smooth function $f$ on $M$ such that $\o = df$,
then $g$ is called a {\it gradient Ricci soliton}. The Ricci soliton
is said to be {\it shrinking, steady} or {\it expanding} according as $\rho >0, \rho = 0$, or $\rho <0$, respectively.
In the case of a gradient Ricci soliton, (\ref{eqn1-1-2}) becomes
\be
r_g + Ddf = \rho g. \label{eqn1-1-3}
\ee
Clearly, Einstein metrics are Ricci solitons with $f$ being trivial.
Another interesting special case occurs when $f(x) = \frac{\rho}{2}|x|^2$ on ${\Bbb R}^n$. 
In this case,
$$
Ddf = \rho g,
$$
which gives  a gradient Ricci soliton where the background metric is flat.
This example is called a Gaussian.

Generalizing the trivial Ricci solitons, Petersen and Wylie (\cite{p-w}) introduced the notion of rigidity for
gradient Ricci solitons. A gradient Ricci soliton is said to be {\it rigid} if it is isometric to a quotient
of $N \times {\Bbb R}^k$, where $N$ is an Einstein manifold and $f = \frac{\rho}{2}|x|^2$ on the Euclidean factor.
That is, the Riemannian manifold $(M, g)$ is isometric to $N\times_{\Gamma} {\Bbb R}^k$, where $\Gamma$ acts freely on $N$ and by
orthogonal transformations on ${\Bbb R}^k$. When $M$ is compact, a Ricci soliton $(M, g)$ is rigid if and only if it is
Einstein. The rigidity of gradient Ricci solitons has been studied, for example, in \cite{l-r, na, p-w, p-w2}.
Note that Einstein manifolds have harmonic Weyl tensor,
and Ricci solitons can be considered as generalizations of Einstein metrics.
Therefore, it is natural to ask about the relation between rigidity  and  harmonicity of the Weyl tensor on a Ricci soliton.
In this regard, Cao-Wang-Zhang (\cite{c-w-z}), Ni-Wallach (\cite{n-w}), and Petersen-Wylie (\cite{p-w3})
have studied the classification of complete gradient shrinking Ricci solitons
with vanishing Weyl curvature tensor under certain assumptions on the Ricci curvature.
In \cite{l-r}, Fern\'andez-Lop\'ez and Gar\'ia-R\'io proved that if $(M, g)$ is a compact Ricci soliton, then
$(M, g)$ is rigid if and only if it has harmonic Weyl tensor. We say that a Riemannian manifold $(M, g)$ has harmonc Weyl tensor if
$\delta \mathcal W = 0$, or equivalently if $r_g - \frac{s_g}{2(n-1)}g$ is a Codazzi tensor,
where $\mathcal W$ and  $s_g$ denote the Weyl curvature tensor and  the
scalar curvature of the metric $g$, respectively, and $\delta$ is the divergence operator, which is the
adjoint of the differential operator $d$ acting on tensors.
 In \cite{m-s}, Munteanu and Sesum extended these results to show that
any $n$-dimensional complete gradient Ricci soliton with harmonic Weyl tensor is a finite quotient of ${\Bbb R}^n,\,
{\Bbb S}^{n-1}\times {\Bbb R},\, $ or ${\Bbb S}^n$.

In this paper, we consider gradient shrinking Ricci solitons with weakly harmonic Weyl tensor.
A gradient Ricci soliton $(M, g, f)$ is said to have weakly harmonic Weyl tensor if
\be
\delta \mathcal W (\cdot, \cdot, \nabla f) = 0.\label{eqn3-25-6}
\ee
Note that this class includes gradient Ricci solitons $(M, g)$ that have locally conformally flat metrics
or more generally harmonic Weyl tensors.

Some results exist  concerning  gradient Ricci solitons and quasi-Einstein manifolds $(M, g, f)$ for $n \ge 3$,
with a relaxed Weyl curvature condition 
rather than a local conformal flatness  or a harmonic Weyl curvature condition.
For example, G. Catino (\cite{cat}) studied generalized quasi-Einstein manifolds with harmonic Weyl tensor
such that ${\mathcal W}(\n f, \cdot, \cdot, \cdot) = 0$. In particular, he proved that  the condition
${\mathcal W}(\n f, \cdot, \cdot, \cdot) = 0$ for a quasi-Einstein manifold implies that
the conformal metric $\tilde g = e^{-\frac{2}{n-2}f}g$ has harmonic Wely tensor.
P. Petersen and W. Wylie (\cite{p-w2}) proved
a classification theorem on complete gradient Ricci solitons $(M^n, g, f)$ for $n \ge 3$
 with constant scalar curvature, and ${\mathcal W}(\n f, \cdot, \cdot, \n f) = {\rm o} \left(|\n f|^2\right)$.


In this paper, we prove that if  $(M, g)$ is a compact gradient shrinking Ricci soliton having weakly harmonic Weyl
curvature tensor, then $(M, g)$ is Einstein.

\vspace{.2in}
\noindent
{\bf Theorem A}\,\,
Let $(M^n, g, f)$ be a {compact} gradient shrinking Ricci soliton, 
and uppose that $\delta \mathcal W(\cdot, \cdot, \nabla f) = 0$.
Then $(M, g)$ is Einstein.
\vspace{.2in}

In the case of a noncompact Ricci soliton, we  prove the following rigidity reult which is exactly the same property
as  when $(M, g)$  is locally conformally flat or has  harmonic Weyl curvature.

\vspace{.2in}
\noindent
{\bf Theorem B}\,\,
Let $(M^n, g, f)$ be a {complete noncompact} gradient shrinking Ricci soliton,
suppose that $\delta \mathcal W(\cdot, \cdot, \nabla f) = 0$.
Then $(M, g)$ is rigid.
\vspace{.2in}

One of main ingredients in proving the rigidity of Ricci solitons with vanishing Weyl curvature tenosr
is that the condition $\delta \mathcal W$  makes it possible to compute the divergence 
of the full Riemannian curvature tensor $R$. 
Therefore, by  using an integral identity containing the divergence of $R$ as an integrand, one can show that
the scalar curvature must be constant. 

However, in the case of a weakly harmonic Weyl curvature tensor, it is not easy 
to deduce the divergence of $R$.  Thus, we must find an alternative  approach to prove our rigidity result for 
 Ricci solitons with weakly harmonic Weyl curvature tensor.
The key observation concerning  a gradient shrinking Ricci soliton satisfying 
$\delta \mathcal W (\cdot, \cdot, \nabla f) = 0$ is the following.
If $(M, g, f)$ is a  gradient Ricci soliton  having weakly harmonic Weyl tensor,
then two gradient vector fields $\n s_g$ and $\n f$ are parallel, and this property implies that 
$s_g, |\n f|^2$, and $r_g(\n f, \n f)$ are all constant along each level hypersurface given by $f$.
Using these properties, together with a maximum principle and a Liouville property
 for $f$-Laplacian given by  Petersen and Wylie (\cite{p-w}), 
we can derive our main results.

\section{Preliminaries and Basic Formulas}

Throughout this paper, we follow the conventions in \cite{besse} regarding differential operators and the Riemannian curvature tensor $R$, with the  exception of only  the Laplacian.
We define the Laplacian by $\Delta = -(\delta d + d \delta)$, the negative operator. 
For example, $\displaystyle{\Delta \vp = \frac{\partial^2 \vp}{\partial x^2} 
+\frac{\partial^2 \vp}{\partial y^2}}$  for a function
$\vp : {\Bbb R}^2 \to {\Bbb R}$.

We start with basic definitions of differential operators acting on tensors.
Let us denote by $C^{\infty}(S^2M)$ the space of sections of symmetric $2$-tensors on a Riemannian manifold $M$. 
Let $D$ be the Levi-Civita connection of $(M, g)$. Then
the differential operator $d^D$ from $C^{\infty}(S^2M)$ into  $C^\infty\left(\Lambda^2 M \otimes T^*M\right)$ is defined as
$$ 
d^D \eta(X,Y,Z)= (D_X \eta)(Y,Z)-(D_Y \eta)(X,Z)
$$
for $\eta \in C^{\infty}(S^2M)$ and vectors $X, Y$, and $Z$. 
 For a function $\varphi \in C^\infty(M)$ and $\eta \in C^{\infty}(S^2M)$,
$d\varphi \wedge \eta $ is defined as
$$
(d\varphi \wedge \eta) (X,Y,Z)= d\varphi(X) \eta(Y,Z)-d\varphi(Y) \eta(X,Z).
$$
Here, $d\varphi$ denotes the usual total differential of $\varphi$.
Then, it clearly follows that
\be
d^D(\varphi\eta) = d\varphi \wedge \eta + \varphi d^D \eta.\label{eqn5}
\ee
We also define two types of interior product $\iota$ and $\hat{\iota}$ for the curvature tensor $R$, by
$$
\iota_{_V}R(X, Y, Z) = R(V, X, Y, Z), \quad \hat{\iota}_{_V} R(X, Y, Z) = R(X, Y, Z, V)
$$
for vectors $V, X, Y,$ and $Z$.

Next, we will describe some basic formulas that can easily be obtained from the Ricci soliton equation (\ref{eqn1-1-3}) which are already well known.
From (\ref{eqn1-1-3}), we have
\be
\Delta f = n \rho - s_g,\quad d\Delta f = - ds_g, \quad \Delta s_g = - \Delta^2 f.\label{eqn1-1-4}
\ee
Note that for any smooth function $\varphi$, we have
\be
\delta Dd\varphi = - d\Delta \varphi - r_g(\n \varphi, \cdot).\label{eqn1-1-5}
\ee
Taking the divergence of (\ref{eqn1-1-3}) and using (\ref{eqn1-1-5}), we have
\be
- \frac{1}{2}ds_g - d\Delta f - r_g(\na f, \cdot) = 0.\label{eqn2016-2-6-1}
\ee
By (\ref{eqn1-1-4}) and (\ref{eqn2016-2-6-1}), it holds that
\bea
\iota_{_{\na f}}r_g = r_g(\na f, \cdot) = \frac{1}{2}ds_g. \label{eqn1-1-6}
\eea
Furthermore, we can see from (\ref{eqn2016-2-6-1}) together with the Ricci soliton equation (\ref{eqn1-1-3}) that
\bea
s_g+ |df|^2 - 2 \rho f = C\,\, {\rm  (constant).} \label{eqn2-6-1}
\eea
Since $\displaystyle{\delta r_g = - \frac{1}{2}ds_g}$, it holds that
\bea
\frac{1}{2}\delta ds_g
&=&
-\frac{1}{2}\langle \na s_g, \na f\rangle - \langle r_g, Ddf\rangle.
\eea
Therefore,
\be
\Delta s_g = \langle \na s_g, \na f\rangle + 2\langle r_g, Ddf\rangle.\label{eqn1-1-7}
\ee
It follows from (\ref{eqn1-1-3}) again that
\be
\langle r_g, Ddf\rangle = \r \Delta f - |Ddf|^2\label{eqn1-5-1-100}
\ee
and
\be
\langle r_g, Ddf\rangle = \r s_g - |r_g|^2.\label{eqn1-5-1}
\ee
Substituting (\ref{eqn1-5-1})  into (\ref{eqn1-1-7}), we obtain
\be
\Delta s_g = \langle \na s_g, \na f\rangle +  2\r s_g - 2|r_g|^2.\label{eqn1-5-2}
\ee
This  implies that  a gradient shrinking Ricci solition  has nonnegative scalar curvature.
In fact, if $\displaystyle{\min_M s_g = s_g(x_0)}$, then $\rho s_g(x_0) \ge |r_g|^2(x_0) \ge 0$.
Of course, this fact is well known (\cite{chen}).
Moreover, substituting (\ref{eqn1-5-2}) into (\ref{eqn1-1-7}) and using (\ref{eqn1-5-1-100}), we get
\bea
\Delta (2\r f - s_g) + \langle \na s_g, \na f\rangle = 2|Ddf|^2.\label{eqn1-1-8}
\eea

Next, we state some well-known facts that are needed to prove our main theorems.

\begin{lem}[\cite{besse}]\label{lem2011-12-26-10}
For any Riemannian manifold $(M, g)$,
 we have
\bea
\delta {\mathcal W} = - \frac{n-3}{n-2}d^D \left( r_g - \frac{s_g}{2(n-1)}g\right)\label{eqn10}
\eea
 under the  identification of $T^*M \otimes \Lambda^2M$ with  $\Lambda^2M \otimes T^*M$.
\end{lem}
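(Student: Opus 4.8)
The plan is to obtain the identity from the orthogonal decomposition of the full curvature tensor into its Weyl, traceless-Ricci, and scalar pieces. Writing the Schouten tensor as $A = \frac{1}{n-2}\left(r_g - \frac{s_g}{2(n-1)}g\right)$, the curvature splits as $R = \mathcal{W} + P$, where $P$ is the Kulkarni--Nomizu product of $A$ with $g$, with components $P_{ijkl} = A_{ik}g_{jl} + A_{jl}g_{ik} - A_{il}g_{jk} - A_{jk}g_{il}$. Since $\mathcal{W}$ is totally trace-free, all of the Ricci and scalar curvature information is carried by $P$, so this decomposition is the natural starting point.

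First I would apply the divergence $\delta$ to both sides. For $\delta R$, the contracted second Bianchi identity from \cite{besse} expresses the divergence of the curvature tensor in terms of the exterior covariant derivative $d^D r_g$ of the Ricci tensor, under the identification of $T^*M \otimes \Lambda^2 M$ with $\Lambda^2 M \otimes T^*M$. For the second term I would compute $\delta P$ directly: since $g$ is parallel, the covariant derivative falls only on $A$, and expanding the four summands of $P$ reduces the divergence to a combination of $d^D A$, the one-form $\delta A$, and the differential $d(\mathrm{tr}\, A)$ of the trace.

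Next I would evaluate the trace terms. We have $\mathrm{tr}\, A = \frac{s_g}{2(n-1)}$, and by the once-contracted Bianchi identity $\delta r_g = -\frac{1}{2}ds_g$ the one-form $\delta A$ is expressible purely in terms of $ds_g$. Substituting these back and collecting terms, the scalar-curvature contributions organize so that the $d^D r_g$ piece and the $ds_g$ pieces recombine exactly into $d^D\left(r_g - \frac{s_g}{2(n-1)}g\right)$. Tracking the numerical coefficients through this combination, together with the tracelessness of $\mathcal{W}$, then isolates $\delta \mathcal{W}$ and produces the factor $-\frac{n-3}{n-2}$.

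The main obstacle is the bookkeeping in the divergence of $P$: each of its four summands contributes both a $d^D A$-type piece and trace pieces, and the sign conventions of \cite{besse} for $\delta$ and for the contracted Bianchi identity must be applied consistently so as to land on the stated coefficient rather than a spurious multiple. Alternatively, since this is a standard curvature identity, one may simply verify it in a local orthonormal frame by expanding both sides and matching components, trading the conceptual argument for a direct but routine index computation.
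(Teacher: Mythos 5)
Your proposal is correct: the paper itself gives no proof of this lemma (it simply cites Besse), and your argument is precisely the standard derivation found there --- decompose $R = \mathcal W + A \owedge g$ with the Schouten tensor $A$, apply the once-contracted second Bianchi identity $\delta R = d^D r_g$, compute $\delta(A\owedge g)$ (which yields only $d^D A$ and $\delta A \wedge g$ terms, since the divergence on the first slot produces no separate $d(\mathrm{tr}\,A)$ contribution), and use $\delta r_g = -\tfrac12 ds_g$ to get $\delta A = -\tfrac{1}{2(n-1)}ds_g$, after which the coefficients collect into the factor $-\tfrac{n-3}{n-2}$. This is exactly the computation the paper is implicitly relying on, so there is nothing to fill in beyond the routine index bookkeeping you describe.
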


\begin{lem}\label{lem2011-12-26-1-1}
For any function $f$ on a Riemannian manifold $(M, g)$, it holds that
\bea
d^D (Ddf)(X, Y, Z) =  \langle R(X, Y)Z, \n f\rangle = (\hat{\iota}_{_{\n f}}R)(X, Y, Z).\label{eqn2}
\eea
\end{lem}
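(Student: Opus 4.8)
The identity is tensorial in $X$, $Y$, $Z$, so the plan is to verify it pointwise. At an arbitrary point $p \in M$ I would choose the vector fields $X$, $Y$, $Z$ to be parallel at $p$ (for instance the coordinate vector fields of a normal coordinate system centered at $p$), so that $D_X Y = D_X Z = D_Y X = D_Y Z = 0$ and $[X,Y] = 0$ at $p$. This choice annihilates all the lower-order Christoffel-type correction terms and isolates the curvature contribution. With this in hand I would first expand the definition of $d^D$ applied to the symmetric $2$-tensor $Ddf$, namely
$$
d^D(Ddf)(X,Y,Z) = (D_X Ddf)(Y,Z) - (D_Y Ddf)(X,Z),
$$
and write the Hessian in the form $Ddf(Y,Z) = \langle D_Y \n f, Z\rangle$.

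Because the chosen frame is parallel at $p$, the third covariant derivatives collapse there to
$$
(D_X Ddf)(Y,Z) = \langle D_X D_Y \n f, Z\rangle, \qquad (D_Y Ddf)(X,Z) = \langle D_Y D_X \n f, Z\rangle,
$$
so that $d^D(Ddf)(X,Y,Z) = \langle D_X D_Y \n f - D_Y D_X \n f, \, Z\rangle$ at $p$. Next I would invoke the definition of the curvature tensor: since $[X,Y] = 0$ at $p$, the term $D_{[X,Y]}\n f$ drops out, and Besse's curvature convention gives $D_X D_Y \n f - D_Y D_X \n f = -R(X,Y)\n f$ at $p$, whence $d^D(Ddf)(X,Y,Z) = -\langle R(X,Y)\n f, Z\rangle$. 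Finally, applying the skew-symmetry of $R$ in its last two arguments, $\langle R(X,Y)\n f, Z\rangle = -\langle R(X,Y)Z, \n f\rangle$, I obtain
$$
d^D(Ddf)(X,Y,Z) = \langle R(X,Y)Z, \n f\rangle = (\hat{\iota}_{_{\n f}}R)(X,Y,Z),
$$
which is the claim. Since $p$ was arbitrary and both sides are tensorial, the identity holds globally.

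The only real care needed is bookkeeping with the sign conventions. Because the paper adopts Besse's curvature sign, for which $[D_X,D_Y]-D_{[X,Y]}$ carries the opposite sign to the more familiar convention, one must track the minus sign produced by the Ricci commutation identity and then a second minus sign coming from the antisymmetry of $R$ in its last two slots; these two signs cancel to yield the stated expression. Beyond this, there is no analytic difficulty: the content of the lemma is precisely the Ricci identity for the second covariant derivative of a function, repackaged through the operator $d^D$.
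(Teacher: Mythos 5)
Your proof is correct: under Besse's convention $R(X,Y)Z = D_{[X,Y]}Z - D_X D_Y Z + D_Y D_X Z$, the Ricci-identity computation in a frame parallel at $p$ gives exactly $d^D(Ddf)(X,Y,Z) = -\langle R(X,Y)\nabla f, Z\rangle = \langle R(X,Y)Z, \nabla f\rangle = (\hat{\iota}_{\nabla f}R)(X,Y,Z)$, with the two sign flips cancelling as you say. The paper states this lemma with no proof at all, treating it as a standard fact, so your argument is precisely the verification the paper omits; it is worth noting that your sign bookkeeping also settles an internal inconsistency in the paper, whose later equation $d^D r_g = -d^D Ddf = \hat{\iota}_{\nabla f}R$ (in the proof of Theorem 2.4) contradicts the lemma as stated, harmlessly for that proof since only the vanishing of $R(\nabla f, X, Y, \nabla f)$ is used there.
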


Before closing this section, we will prove a result concerning the   rigidity of 
 complete gradient shrinking Ricci solitons with constant scalar curvature and weakly harmonic Weyl
tensor.  In the compact case, this property is already known. In fact, a compact gradient shrinking Ricci
 soliton with constant scalar curvature is known to be Einstein without requiring any condition on the Weyl tensor.
To prove our rigidity, we will require  the following theorem given by  Petersen and Wylie.

\begin{thm}[\cite{p-w}]\label{thm2-6-2}
A shrinking gradient Ricci soliton is rigid if and only if it has constant scalar curvature and is radially flat.
\end{thm}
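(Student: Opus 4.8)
This characterization is due to Petersen and Wylie, so I would reconstruct their argument; throughout, by \emph{radially flat} I mean that the radial sectional curvatures vanish, $R(\cdot,\na f)\na f=0$. The forward implication is a direct verification. On a rigid soliton $M=N\times_\Gamma\mathbb R^k$ with $N$ Einstein and $f=\frac{\r}{2}|x|^2$ on the flat factor, the scalar curvature equals $s_N$ and is therefore constant, and since $\na f$ is tangent to the Euclidean factor every curvature term containing $\na f$ vanishes for a Riemannian product; in particular $R(\cdot,\na f)\na f=0$. Both conditions are tensorial and local, hence they pass to the quotient.

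For the converse I assume constant scalar curvature and radial flatness. Constant $s_g$ together with $(\ref{eqn1-1-6})$ gives $r_g(\na f,\cdot)=\frac12 ds_g=0$, so $\na f\in\ker r_g$; writing $S=Ddf=\r\,\mathrm{Id}-r_g$ and viewing $r_g$ itself as a $(1,1)$-tensor, this reads $S(\na f)=\r\,\na f$. Lemma~\ref{lem2011-12-26-1-1} identifies the skew part of $DS$ with the curvature, namely $\la (D_XS)Y-(D_YS)X,\,Z\ra=(\hat\iota_{\na f}R)(X,Y,Z)$; putting $X=\na f$ and using $R(\cdot,\na f)\na f=0$ (via the pair symmetry $R(\na f,Y,Z,\na f)=R(Z,\na f,\na f,Y)$) kills the curvature term and leaves the radial Riccati identity
\[ (D_{\na f}S)Y=(D_YS)(\na f)=\r SY-S^2Y,\qquad\text{equivalently}\qquad D_{\na f}r_g=r_g^2-\r\,r_g. \]

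The heart of the proof is a moment bootstrap driven by this identity. Constant $s_g$ gives $\mathrm{tr}\,r_g=s_g$ constant, and $(\ref{eqn1-5-2})$ gives $\mathrm{tr}\,r_g^2=|r_g|^2=\r s_g$ constant. Differentiating $\mathrm{tr}\,r_g^k$ along $\na f$ and inserting the Riccati identity yields $D_{\na f}\mathrm{tr}\,r_g^k=k(\mathrm{tr}\,r_g^{k+1}-\r\,\mathrm{tr}\,r_g^k)$; since $\mathrm{tr}\,r_g$ and $\mathrm{tr}\,r_g^2$ are globally constant, this propagates to $\mathrm{tr}\,r_g^k=\r^{k-1}s_g$ for $k=3,4$ as well. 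Consequently
\[ \mathrm{tr}\,(r_g^2-\r r_g)^2=\mathrm{tr}\,r_g^4-2\r\,\mathrm{tr}\,r_g^3+\r^2\,\mathrm{tr}\,r_g^2=\r^3 s_g-2\r^3 s_g+\r^3 s_g=0, \]
and as this is the squared norm of a symmetric tensor we get $r_g^2=\r\,r_g$ and $D_{\na f}r_g=0$. Thus $r_g$ has only the eigenvalues $0$ and $\r$, the $\r$-eigenspace having constant dimension $s_g/\r$, so both eigendistributions $E_0=\ker r_g\ (\ni\na f)$ and $E_\r=\mathrm{im}\,r_g$ are smooth of constant rank, with $r_g=\r\,P_{E_\r}$ and $Ddf=\r\,P_{E_0}$ scaled orthogonal projections.

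It remains to promote this pointwise algebraic splitting to a metric splitting, and this is where I expect the real difficulty. The eigenvalue computation only yields that $r_g$ is parallel in the radial direction; to invoke the de Rham theorem I must show that $E_0$ and $E_\r$ are parallel, equivalently that the constant-eigenvalue tensor $r_g$ is Codazzi, i.e. that the \emph{full} radial curvature $R(\cdot,\cdot,\cdot,\na f)$ vanishes and not merely the radially-flat component $R(\cdot,\na f)\na f$. This is the delicate step in Petersen and Wylie, and it must be combined with a separate analysis on the critical set of $f$ and with completeness in order to globalize. Granting it, $Ddf=\r\,P_{E_0}$ forces $f$ to be the quadratic $\frac{\r}{2}|x|^2$ along the flat, totally geodesic leaves of $E_0$ (where $r_g=0$), while $f$ is constant on the leaves of $E_\r$, which inherit the Einstein metric $r_g=\r g$; passing to the universal cover and using completeness then realizes $M$ as a quotient $N\times_\Gamma\mathbb R^k$, that is, $M$ is rigid.
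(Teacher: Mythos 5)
The paper offers no proof of Theorem~\ref{thm2-6-2} at all: it is imported verbatim from Petersen--Wylie \cite{p-w} and used as a black box (in Theorem~\ref{thm2013-12-9-1}), so the only meaningful comparison is with the original argument of \cite{p-w}. Your forward direction is routine and correct. Your converse also starts exactly the way Petersen--Wylie do: constant $s_g$ together with $r_g(\nabla f,\cdot)=\frac12 ds_g$ puts $\nabla f$ in $\ker r_g$; Lemma~\ref{lem2011-12-26-1-1} plus radial flatness yields the radial Riccati identity $D_{\nabla f}r_g=r_g^2-\rho\, r_g$; constancy of $s_g$ in (\ref{eqn1-5-2}) gives $|r_g|^2=\rho s_g$; and the bootstrap through $\mathrm{tr}\,r_g^3$ and $\mathrm{tr}\,r_g^4$, giving $\mathrm{tr}\bigl(r_g^2-\rho r_g\bigr)^2=0$ and hence eigenvalues $\{0,\rho\}$ with constant multiplicities and $D_{\nabla f}r_g=0$, is indeed the mechanism of their proof. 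Up to that point your reconstruction is correct.

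The problem is everything after ``It remains to promote this pointwise algebraic splitting to a metric splitting.'' That is not a proof step but a statement of what is left, and what is left is the actual content of the theorem. The algebraic data you have obtained ($r_g^2=\rho r_g$, $D_{\nabla f}r_g=0$) do not by themselves give a splitting: you must show the eigendistributions $E_0$ and $E_\rho$ are parallel, equivalently (via (\ref{eqn6-1})) that the \emph{full} radial curvature $R(\cdot,\cdot,\cdot,\nabla f)$ vanishes rather than only the radial sectional curvatures you assumed; you must deal with the critical set of $f$, where $N=\nabla f/|\nabla f|$ and all level-set/flow arguments degenerate; and even granted parallel distributions, identifying the $E_0$-leaves as flat $\mathbb{R}^k$ with $f=\frac{\rho}{2}|x|^2$ requires a Tashiro/Obata-type rigidity for $\mathrm{Hess}\, f=\rho g$, plus completeness and the de Rham theorem on the universal cover to descend to a quotient $N\times_\Gamma\mathbb{R}^k$. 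Your phrase ``Granting it'' concedes precisely this, the delicate half of Petersen--Wylie's argument. As it stands the proposal is an accurate partial reconstruction of \cite{p-w} with an honestly flagged but genuine gap, not a proof of the statement.
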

We say that a gradient Ricci soliton $(M, g, f)$ is radially flat if the sectional curvature of the plane spanned by 
$\nabla f$ and an orthogonal vector to $\nabla f$ vanishes.

\begin{thm}\label{thm2013-12-9-1}
Let $(M^n, g, f)$ be a complete gradient shrinking Ricci soliton, and suppose that 
$\delta \mathcal W(\cdot, \cdot, \nabla f) = 0$.
If $s_g$ is constant, then $(M, g)$ is rigid.
\end{thm}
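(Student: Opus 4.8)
The plan is to invoke the Petersen--Wylie criterion (Theorem~\ref{thm2-6-2}): since $s_g$ is assumed constant, it suffices to prove that $(M,g,f)$ is radially flat, i.e.\ that $R(X,\n f,Y,\n f)=0$ for all $X,Y$, after which rigidity is immediate. The whole argument is therefore a local curvature computation that converts the hypothesis $\delta\mathcal W(\cdot,\cdot,\n f)=0$ into this radial vanishing. First I would dispose of the low-dimensional case: when $n\le 3$ the Weyl tensor vanishes identically, the soliton is locally conformally flat, and rigidity of a constant-scalar-curvature shrinking soliton is then classical, so I may assume $n\ge 4$, where the factor $\frac{n-3}{n-2}$ is nonzero.

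For $n\ge 4$ the key simplification is that constancy of $s_g$ removes the scalar term in Besse's identity (Lemma~\ref{lem2011-12-26-10}): since $ds_g=0$,
\[
\delta\mathcal W=-\frac{n-3}{n-2}\,d^D r_g .
\]
Next I would rewrite $d^D r_g$ via the soliton equation. From $r_g=\rho g-Ddf$ and $d^D g=0$ one gets $d^D r_g=-d^D(Ddf)$, and Lemma~\ref{lem2011-12-26-1-1} identifies the right-hand side with the curvature, yielding $d^D r_g(X,Y,Z)=-R(X,Y,Z,\n f)$. Thus, up to the nonzero constant, $\delta\mathcal W$ is nothing but $R(\cdot,\cdot,\cdot,\n f)$, and inserting $\n f$ into the appropriate argument of the hypothesis should produce the desired radial curvature term.

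The step requiring the most care, and the main obstacle, is the index bookkeeping in the identification $T^*M\otimes\Lambda^2M\cong\Lambda^2M\otimes T^*M$ from Lemma~\ref{lem2011-12-26-10}. One must verify that placing $\n f$ in the last argument of $\delta\mathcal W$ corresponds, after the identification, to inserting $\n f$ into one of the \emph{antisymmetric} slots of $d^D r_g$, so that the hypothesis reads $d^D r_g(Y,\n f,X)=-R(Y,\n f,X,\n f)=0$ for all $X,Y$, rather than the trivially vanishing $R(X,Y,\n f,\n f)$. Getting this placement right is exactly what separates the weak condition from full harmonicity $\delta\mathcal W=0$, which would instead force $R(\cdot,\cdot,\cdot,\n f)=0$ in every slot and hence that $r_g$ is Codazzi. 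Once $R(Y,\n f,X,\n f)=0$ is established for all $X,Y$, this is precisely radial flatness, since taking $X=Y\perp\n f$ gives vanishing sectional curvature of every plane containing $\n f$, and Theorem~\ref{thm2-6-2} then gives that $(M,g)$ is rigid.
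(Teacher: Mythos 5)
Your proposal is correct and follows essentially the same route as the paper's own proof: with $ds_g = 0$, Besse's identity (Lemma~\ref{lem2011-12-26-10}) reduces $\delta\mathcal W$ to $-\frac{n-3}{n-2}\,d^D r_g$, the soliton equation together with Lemma~\ref{lem2011-12-26-1-1} identifies $d^D r_g$ with $\hat{\iota}_{\nabla f}R$, the hypothesis (with $\nabla f$ correctly placed in an antisymmetric slot) yields $R(X,\nabla f,Y,\nabla f)=0$, i.e.\ radial flatness, and Theorem~\ref{thm2-6-2} then gives rigidity. Your explicit disposal of the case $n\le 3$ is in fact a point the paper silently skips --- its step dividing by $n-3$ is invalid when $n=3$ --- so that addition is a small improvement rather than a divergence.
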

\begin{proof}
If $M$ is compact, then $(M, g)$ is Einstein by (\ref{eqn1-1-4}). 
Therefore, we may assume that  $(M,g)$ is complete and noncompact.
Since the scalar curvature $s_g$ is constant,  it follows from  Lemma~\ref{lem2011-12-26-10} that 
\bea
\delta {\mathcal W}  = - \frac{n-3}{n-2} \, d^D  r_g.
\eea
Thus,
\be
d^D r_g(X, \n f, Y) = - \frac{n-2}{n-3}\delta  {\mathcal W} (Y, X, \n f) = 0\label{eqn2013-9-29-1}
\ee
for any vectors $X$ and $Y$. From the soliton equation (\ref{eqn1-1-3}) 
 together with Lemma~\ref{lem2011-12-26-1-1}, we have
\be
d^D r_g = - d^D Ddf = \hat{\iota}_{_{\n f}} R, \label{eqn6-1}
\ee
and so for any vectors $X$ and $Y$, it holds that
\be
d^D r_g(X, Y, \n f) = 0.\label{eqn10-2-2}
\ee
The equations (\ref{eqn2013-9-29-1})-(\ref{eqn10-2-2}) show that
$$
0 = d^D r_g(\n f, X, Y)  = R(\n f, X, Y, \n f).
$$
Thus,  $(M, g)$ is rigid by Theorem~\ref{thm2-6-2}.
\end{proof}

\section{Ricci solitons with weakly harmonic Weyl tensor}

In this section, we will describe  some properties for a gradient Ricci soliton with weakly harmonic Weyl tensor.
As mentioned above, one key property of such a gradient Ricci soliton is that gradient vector fields
$\nabla f$ and $\nabla s_g$ are parallel, which implies that all functions,
 including the scalar curvature $s_g$ and
$|\n f|^2$, are constants along each level set of $f$. The second result regarding such 
gradient Ricci solitons is that
the Ricci tensor can be decomposed into the direction $\n f$ and its orthogonal direction, which
implies that the Ricci tensor has at most two eigenvalues of multiplicity $n-1$ and  $1$, respectively.
 This property plays an important role in investigating the structure of harmonic  curvature (cf. \cite{der}, \cite{laf}).

When the scalar curvature $s_g$ is constant, we already know  that $(M, g)$ is rigid.
Therefore, in this section, we carry out  various computations on
gradient Ricci solitons with  the assumption that the scalar curvature $s_g$ is nonconstant.

\begin{lem}
Let $(M^n, g, f)$ be a gradient Ricci soliton with weakly harmonic Weyl tensor.
Then, for any vectors $X$ and $Y$,
\be
D_Y r_g (\n f, X) = D_{\n f} r_g (X, Y) + \frac{1}{2(n-1)} \left(df \otimes ds_g - \langle \n f, \n s_g\rangle g \right)(X, Y).\label{eqn20-5-1}
\ee
\end{lem}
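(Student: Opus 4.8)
The plan is to turn the weakly harmonic Weyl hypothesis into a pointwise identity for $d^D r_g$ and then read off (\ref{eqn20-5-1}) by a direct expansion. Set $A = r_g - \frac{s_g}{2(n-1)}g$, the (unnormalized) Schouten tensor. By Lemma~\ref{lem2011-12-26-10}, under the identification of $T^*M \otimes \Lambda^2 M$ with $\Lambda^2 M \otimes T^*M$ one has $\delta\mathcal W = -\frac{n-3}{n-2}\,d^D A$, which in components is the same slot bookkeeping already used in the proof of Theorem~\ref{thm2013-12-9-1}:
\begin{equation*}
\delta\mathcal W(Z, X, Y) = -\frac{n-3}{n-2}\,d^D A(X, Y, Z)
\end{equation*}
for all vectors $X, Y, Z$.

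First I would insert $\nabla f$ into the last slot of $\delta\mathcal W$. Since the hypothesis is $\delta\mathcal W(\cdot, \cdot, \nabla f) = 0$, the displayed identity gives $d^D A(X, \nabla f, Z) = 0$ for all $X, Z$ whenever $n \ge 4$; the point is that $\nabla f$ thereby lands in one of the two antisymmetric slots of $d^D A$, which is exactly what makes the computation close. When $n = 3$ the Weyl tensor vanishes identically, so the hypothesis is vacuous and that case must be treated separately or absorbed into the known classification. This is the only step in which the hypothesis enters.

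It then remains to expand. By definition $d^D A(X, \nabla f, Z) = (D_X A)(\nabla f, Z) - (D_{\nabla f} A)(X, Z)$, and because $g$ is parallel, $(D_U A)(V, W) = (D_U r_g)(V, W) - \frac{ds_g(U)}{2(n-1)}\,g(V, W)$. Substituting this and using the symmetry of $r_g$, the vanishing of $d^D A(X, \nabla f, Z)$ becomes
\begin{equation*}
(D_X r_g)(\nabla f, Z) - (D_{\nabla f} r_g)(X, Z) = \frac{1}{2(n-1)}\Big(ds_g(X)\,\langle \nabla f, Z\rangle - \langle \nabla f, \nabla s_g\rangle\, g(X, Z)\Big).
\end{equation*}
Relabeling $X \mapsto Y$ and $Z \mapsto X$, and recognizing $ds_g(Y)\,\langle \nabla f, X\rangle = (df \otimes ds_g)(X, Y)$, yields (\ref{eqn20-5-1}) after moving the $D_{\nabla f} r_g$ term to the right-hand side. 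The only genuinely delicate point is fixing the slot identification in Lemma~\ref{lem2011-12-26-10} with the correct sign, so that $\nabla f$ occupies an antisymmetric slot; once that is pinned down, everything else is a routine, sign-tracked computation requiring only the parallelism of $g$ and the symmetry of the Ricci tensor.
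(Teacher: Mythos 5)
Your proof is correct and takes essentially the same route as the paper: the paper likewise combines Lemma~\ref{lem2011-12-26-10} with the hypothesis $\delta\mathcal W(\cdot,\cdot,\nabla f)=0$ to get $d^D r_g(Y, \nabla f, X) = \frac{1}{2(n-1)}\, ds_g\wedge g\,(Y, \nabla f, X)$ and then notes this is equivalent to (\ref{eqn20-5-1}); your write-up merely makes the slot identification, the expansion using $D g = 0$, and the relabeling explicit. Your caveat about $n=3$, where the factor $\frac{n-3}{n-2}$ vanishes and the hypothesis is vacuous so that nothing can be deduced about $d^D\bigl(r_g - \frac{s_g}{2(n-1)}g\bigr)$, is a legitimate observation that the paper passes over in silence.
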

\begin{proof}
It follows from  Lemma~\ref{lem2011-12-26-10}, together with 
the assumption  $\delta \mathcal W(\cdot, \cdot, \n f) = 0$, that
\bea
d^D r_g(Y, \n f, X) = \frac{1}{2(n-1)}ds_g\wedge g(Y, \n f, X) \label{eqn15-5}
\eea
for any vectors $X$ and $Y$. This is equivalent to (\ref{eqn20-5-1}).
\end{proof}

\begin{lem}\label{lem2013-12-10-1}
Let $(M^n, g, f)$ be a gradient Ricci soliton with weakly harmonic Weyl tensor.
Then, $\n f$ and $\n s_g$ are  parallel.
\end{lem}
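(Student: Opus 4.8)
The plan is to exploit the symmetries of the identity (\ref{eqn20-5-1}) by antisymmetrizing it in $X$ and $Y$. Writing (\ref{eqn20-5-1}) once as stated and once with the roles of $X$ and $Y$ interchanged, I would subtract the two equations. Since $r_g$ is symmetric, the derived tensor $D_{\n f} r_g$ is symmetric in its two arguments, so the term $D_{\n f} r_g(X, Y)$ cancels in the difference; the multiple of $g$ is likewise symmetric and also cancels. What remains on the right-hand side is precisely the wedge
\[
\frac{1}{2(n-1)}\left(df \wedge ds_g\right)(X, Y)
= \frac{1}{2(n-1)}\bigl(df(X)\, ds_g(Y) - df(Y)\, ds_g(X)\bigr).
\]

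Next I would identify the surviving left-hand side with a single component of $d^D r_g$. Using the symmetry of $r_g$ once more, so that $D_Y r_g(\n f, X) = D_Y r_g(X, \n f)$, the remaining difference $D_Y r_g(\n f, X) - D_X r_g(\n f, Y)$ is exactly $d^D r_g(Y, X, \n f)$. At this point no further computation of $\delta \mathcal W$ is needed: I would invoke (\ref{eqn6-1}), namely $d^D r_g = \hat{\iota}_{_{\n f}} R$, to rewrite this left-hand side as $R(Y, X, \n f, \n f)$, which vanishes identically because the curvature tensor is antisymmetric in its last two slots.

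Comparing the two sides, I conclude that $df \wedge ds_g = 0$ at every point, so $\n f$ and $\n s_g$ are pointwise linearly dependent; that is, the two gradient fields are parallel. Where $\n f \ne 0$ this can be written as $\n s_g = \lambda \n f$ for a function $\lambda$, which is the form in which the parallelism is applied in the following section to show that $s_g$, $|\n f|^2$, and $r_g(\n f, \n f)$ are constant on the level sets of $f$.

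The content here is purely algebraic, so the only real care required is bookkeeping: one must deploy the symmetry of $r_g$ --- and of $D_{\n f} r_g$ --- at the right places, both to cancel the symmetric terms under antisymmetrization and to match the leftover antisymmetric combination with $d^D r_g(\cdot, \cdot, \n f)$. I expect no analytic obstacle; once the antisymmetrization is organized correctly, the curvature symmetry $R(Y, X, \n f, \n f) = 0$ does all the work.
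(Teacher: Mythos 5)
Your proof is correct and is essentially the paper's own argument: both antisymmetrize (\ref{eqn20-5-1}) in $X$ and $Y$, cancel the symmetric terms, and kill the remaining left-hand side using the vanishing $d^D r_g(\cdot, \cdot, \nabla f) = R(\cdot, \cdot, \nabla f, \nabla f) = 0$ coming from (\ref{eqn6-1}) and the antisymmetry of the curvature tensor. The only cosmetic difference is that the paper obtains this vanishing by first combining the cyclic identity for $d^D r_g$ with $d^D r_g(X, Y, \nabla f) = 0$ to get $D_X r_g(Y, \nabla f) = D_Y r_g(X, \nabla f)$, whereas you identify the antisymmetrized left-hand side directly with $d^D r_g(Y, X, \nabla f)$.
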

\begin{proof}
Since
$$
d^D r_g(X, Y, \n f) + d^D r_g(Y, \n f, X) + d^D r_g(\n f, X, Y) =0
$$
and $d^D r_g(X, Y, \n f) = 0$, we have
\be
D_Xr_g(Y, \n f) = D_Yr_g(X, \n f)\label{eqn2013-9-13-7}
\ee
for any vectors $X$ and $Y$.
Switching the roles of $X$ and $Y$ in (\ref{eqn20-5-1}), we get
\bea
D_Xr_g(\n f, Y) = D_{\n f}r_g(X, Y) + \frac{1}{2(n-1)} \left(df \otimes ds_g - \langle \n f, \n s_g\rangle g \right)(Y, X).\label{eqn20-5-10}
\eea
Comparing this with (\ref{eqn20-5-1}), we obtain
$$
\langle \n s_g, X \rangle \langle \n f, Y\rangle = \langle \n s_g, Y \rangle \langle \n f, X\rangle
$$
for any vectors $X$ and $Y$. Thus,
\bea
\langle\n s_g, X \rangle  \n f = \langle \n f, X \rangle \n s_g\label{eqn2013-11-1-1}
\eea
for any vector $X$.
\end{proof}

Denote   the set of all critical points of 
$f$ and $s_g$  by ${\rm Crit}(f)$ and ${\rm Crit}(s_g)$, respectively. Then, the identity
$ds_g = 2 r_g(\n f, \cdot)$ implies that
$$
{\rm Crit}(f) \subset {\rm Crit}(s_g).
$$
Furthermore, by Lemma~\ref{lem2013-12-10-1}  we have
\bea
\n s_g =  2 r_g(N, N) \n f\label{eqn2013-12-26-1}
\eea
 on the set $M \setminus {\rm Crit}(f)$. Here, $N = \frac{\n f}{|\n f|}$.
Let
$$
\a:=r_g(N, N),
$$
so that
\be
\n s_g = 2\a \n f\label{eqn2015-6-14-2-1}
\ee
on the set $M \setminus {\rm Crit}(f)$.
Note that even though $\a$ is defined on the set $M \setminus {\rm Crit}(f)$, 
 $\a$ can be extended as a $C^0$ function on the whole of $M$ since
 $|\a| \le |r_g|$.

 \begin{lem}\label{lem2015-7-14-10}
Let $(M^n, g, f)$ be a complete gradient shrinking Ricci soliton,
and assume that $\delta \mathcal W(\cdot, \cdot, \nabla f) = 0$.
Then, for a vector field $X$ orthogonal to $\n f$ it holds that
$$
r_g(X, \n f) = 0.
$$
Furthermore, for unit vector fields $X$ and $Y$ that are orthogonal to $\n f$ with $X \perp Y$, it holds that 
$$
R(X, \n f, Y, \n f) = 0
$$
and
$$
R(X, \n f, X, \n f) = \frac{1}{n-1}r_g(\n f, \n f) 
= \frac{\a |\n f|^2}{n-1}.
$$
\end{lem}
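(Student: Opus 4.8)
The plan is to extract both assertions from a single closed-form expression for the mixed sectional curvatures $R(\cdot, \n f, \cdot, \n f)$, which the weakly harmonic Weyl hypothesis produces almost immediately.

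For the first assertion I would use only the soliton identity $r_g(\n f, \cdot) = \frac{1}{2}ds_g$ together with the parallelism $\n s_g = 2\a\,\n f$ recorded in (\ref{eqn2015-6-14-2-1}) (equivalently Lemma~\ref{lem2013-12-10-1}). On $M \setminus \mathrm{Crit}(f)$ this gives, for any $X$,
$$
r_g(\n f, X) = \tfrac{1}{2}\langle \n s_g, X\rangle = \a\,\langle \n f, X\rangle,
$$
which vanishes whenever $X \perp \n f$; on $\mathrm{Crit}(f)$ the claim $r_g(X, \n f)=0$ is trivial since $\n f=0$. Hence $r_g(X, \n f)=0$ for every $X$ orthogonal to $\n f$, on all of $M$.

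For the curvature statements the key step is to convert the weakly harmonic Weyl identity into a formula for $R(\cdot, \n f, \cdot, \n f)$. Starting from (\ref{eqn15-5}), namely $d^D r_g(Y, \n f, X) = \frac{1}{2(n-1)}(ds_g\wedge g)(Y, \n f, X)$, I would use the soliton relation $d^D r_g = \hat{\iota}_{\n f}R$ of (\ref{eqn6-1}) (which rests on Lemma~\ref{lem2011-12-26-1-1}) to rewrite the left-hand side as $R(Y, \n f, X, \n f)$, and then expand the wedge with the paper's convention. Up to the sign fixed by the Besse conventions this yields
$$
R(Y, \n f, X, \n f) = \frac{1}{2(n-1)}\Big[\langle \n s_g, \n f\rangle\,\langle Y, X\rangle - \langle \n s_g, Y\rangle\,\langle \n f, X\rangle\Big].
$$
The pair symmetry $R(X, \n f, Y, \n f) = R(Y, \n f, X, \n f)$ then lets me evaluate on the required vectors. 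If $X, Y \perp \n f$ with $X \perp Y$, both $\langle Y, X\rangle$ and $\langle \n f, X\rangle$ vanish, so $R(X, \n f, Y, \n f)=0$. If $X=Y$ is a unit vector orthogonal to $\n f$, only the first term survives, giving $R(X, \n f, X, \n f) = \frac{1}{2(n-1)}\langle \n s_g, \n f\rangle$; substituting $\langle \n s_g, \n f\rangle = 2\a|\n f|^2$ and $r_g(\n f, \n f) = \a|\n f|^2$ produces $R(X, \n f, X, \n f) = \frac{1}{n-1}r_g(\n f, \n f) = \frac{\a|\n f|^2}{n-1}$. All these formulas hold trivially on $\mathrm{Crit}(f)$, hence on all of $M$.

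The computation is short, so the only real care is bookkeeping: pinning down the exact sign and normalization in (\ref{eqn6-1}) against the Besse curvature convention, expanding $ds_g\wedge g$ with the paper's definition of the wedge, and invoking the pair symmetry of $R$ to pass between $R(X, \n f, Y, \n f)$ and $R(Y, \n f, X, \n f)$. There is no genuine analytic difficulty here; the entire content sits in the passage from the weakly harmonic Weyl hypothesis to the closed-form expression for $R(\cdot, \n f, \cdot, \n f)$, after which both conclusions are immediate.
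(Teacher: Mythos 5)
Your proof is correct and takes essentially the same route as the paper: the first claim follows from $r_g(\n f,\cdot)=\frac{1}{2}ds_g$ together with the parallelism of $\n s_g$ and $\n f$, and the curvature claims follow by combining (\ref{eqn15-5}) with $d^D r_g = \hat{\iota}_{_{\n f}}R$ from (\ref{eqn6-1}) and expanding $ds_g\wedge g$ on the given vectors. The only differences are cosmetic --- you record a general closed-form expression for $R(\cdot,\n f,\cdot,\n f)$ before specializing, and your sign deferral to the Besse conventions mirrors the same bookkeeping looseness present in the paper's own computation.
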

\begin{proof}
For a vector field $X$ that is orthogonal to $\n f$, we have $\langle \n s_g, X\rangle = 0$,
 since $\n s_g$ and $\n f$ are parallel. Therefore,
$$
r_g(X, \n f) = \frac{1}{2} ds_g(X) = 0.
$$
Next, by Lemma~\ref{lem2011-12-26-1-1} and (\ref{eqn15-5}) we have
\bea
R(X, \n f, Y, \n f) &=&
d^D r_g(X, \n f, Y)\\
&=& \frac{1}{2(n-1)}ds_g\wedge g(X, \n f, Y) = 0,
\eea
and
\bea
R(X, \n f, X, \n f) &=&
d^D r_g(X, \n f, X) \\
&=& \frac{1}{2(n-1)}ds_g\wedge g(X, \n f, X)\\
&=&
\frac{1}{2(n-1)}ds_g(\n f) 
= \frac{\a |\n f|^2}{n-1}.
\eea
\end{proof}

\begin{remark}
{\rm 
As an application of Lemma~\ref{lem2015-7-14-10}, we can show that the Ricci tensor $r_g$  has
at most two eigenvalues of multiplicitiy $1$ and $n-1$. In fact, from the curvature decomposition (cf. \cite{besse}) and
Lemma~\ref{lem2011-12-26-10}, we can compute
\be
&&- \frac{n-2}{n-3}\delta {\mathcal W} + \frac{1}{2(n-1)}ds_g \wedge g 
- \frac{1}{n-2} \iota_{\n f}r_g \wedge g \label{eqn3-25-1}\\
&&\qquad + \frac{s_g}{(n-1)(n-2)}df \wedge g - \frac{1}{n-2}df \wedge r_g = 0.\nonumber
\ee
Let $\{e_1, \cdots, e_{n-1}, N\}$ be a local frame. 
Substituting the triple $(N, e_i, e_i)$ into (\ref{eqn3-25-1}), we obtain
$$
- \frac{1}{|\n f|} \frac{n-2}{n-3}\delta{\mathcal W}(N, e_i, e_i) + \frac{s_g - \a}{(n-1)(n-2)}
- \frac{1}{n-2}r_g(e_i, e_i) = 0.
$$
By considering Lemma~\ref{lem2011-12-26-10} again, since
$$ - \frac{n-2}{n-3}\delta {\mathcal W}(N, e_i, e_i) = d^D r_g(e_i, e_, N) - 
\frac{1}{2(n-1)}ds_g \wedge g (N, e_i, e_i) = 0,
$$ 
we have
\bea
r_g(e_i, e_i) = \frac{s_g-\a}{n-1}.
\eea
More generally, we can show that
\bea
r_g(e_i, e_j) = \frac{s_g-\a}{n-1} \delta_{ij},\quad r_g(N, N) = \a, \quad r_g(N, e_i) = 0.\label{eqn3-25-2}
\eea
\qed}
\end{remark}

Next, we will show that every geometric quantity, including the function $\a$, is constant along each
level set of $f$.
Let $c$ be a regular value of $f$, so that $\Sigma = f^{-1}(c)$ is a hypersurface of $M$.
Let $X$ be a vector tangent to $\Sigma = f^{-1}(c)$. Then,
$$
X(|\n f|^2) = 2 Ddf(X, \n f) = - 2 r_g(X, \n f) = -\langle \n s_g, X\rangle =  0,
$$
since $\n f$ and $\n s_g$ are parallel. Therefore,  $|\n f|$ is constant on $f^{-1}(c)$. 

Let $\{e_1, \cdots, e_{n-1}, N\}$ be a local frame. Then, by Lemma~\ref{lem2015-7-14-10} together with
the Ricci soliton equation (\ref{eqn1-1-3}), we have
\bea
Ddf(N, e_i) = 0
\eea
for all $i = 1, \cdots, n-1$. Thus,
\bea
D_NN &=& \langle D_NN, N\rangle N + \sum_{i=1}^{n-1} \langle D_NN, e_i \rangle e_i\\
&=&
 \sum_{i=1}^{n-1} N\left(\frac{1}{|df|}\right) \langle \n f, e_i \rangle e_i +
  \sum_{i=1}^{n-1}  \frac{1}{|df|} \langle D_N df, e_i\rangle e_i = 0.
\eea
To show that the function $\a = r_g(N, N)$ is constant along each level hypersurface of $f$, recall that (\ref{eqn2013-9-13-7})
$$
D_X r_g(Y, \n f) = D_Y r_g(X, \n f)
$$ 
for any vectors $X$ and $Y$. In particular,
\be
D_Xr_g(\n f, \n f) = D_{\n f}r_g(X, \n f) \label{eqn2013-9-15-11}
\ee
for any vector $X$. Let $X$ be a vector with $X \perp \n f$.
Since $D_N N = 0$,  it follows from  (\ref{eqn2013-9-15-11}) that
\bea
\frac{1}{2} X(\a) &=& X\left(\frac{1}{|\n f|^2} r_g(\n f, \n f)\right)\\
&=&
\frac{1}{|\n f|^2} X(r_g(\n f, \n f)) = \frac{1}{|\n f|^2} \left[D_Xr_g(\n f, \n f) + 2r_g(D_Xdf, \n f)\right]\\
&=&
\frac{1}{|\n f|^2} \left[D_{\n f}r_g(X, \n f) + 2r_g(D_Xdf, \n f)\right]\\
&=&
\frac{1}{|\n f|^2} \left[\n f (r_g(X, \n f)) - r_g(D_{\n f}X, \n f) - r_g(X, D_{\n f}df)\right]\\
&=& 0.
\eea
Thus, $\a = r_g(N, N)$ is also constant   on each level set $f^{-1}(c)$. 
Consequently, we have the following result.

\begin{lem}\label{lem2013-11-22-1}
Let $(M^n, g, f)$ be a gradient  Ricci soliton with weakly harmonic Weyl tensor.
Then, $f, \, s_g, \,  \a, \,  |\n f|^2$,
and $2r_g(\n f, \n f) = \langle \n s_g, \n f\rangle$
are all constant along each level hypersurface given by $f$. 
\end{lem}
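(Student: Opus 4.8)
The plan is to fix a regular value $c$ of $f$, so that $\Sigma = f^{-1}(c)$ is a hypersurface, and to verify that each listed function has vanishing derivative along every vector $X$ tangent to $\Sigma$, that is, along every $X \perp \n f$. The function $f$ is trivially constant on $\Sigma$, being equal to $c$. For $s_g$, I would use that $\n s_g$ is parallel to $\n f$ by Lemma~\ref{lem2013-12-10-1}, so that $X(s_g) = \langle \n s_g, X\rangle = 0$ whenever $X \perp \n f$. For $|\n f|^2$, the computation $X(|\n f|^2) = 2\, Ddf(X, \n f) = -2\, r_g(X, \n f) = 0$ combines the soliton equation (\ref{eqn1-1-3}) with the vanishing $r_g(X, \n f) = 0$ for $X \perp \n f$ supplied by Lemma~\ref{lem2015-7-14-10}.

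The genuinely delicate step, and the one I expect to be the main obstacle, is the constancy of $\a = r_g(N, N)$ along $\Sigma$, where $N = \n f / |\n f|$. I would begin by establishing the geodesic property $D_N N = 0$: from Lemma~\ref{lem2015-7-14-10} one has $r_g(N, e_i) = 0$ for a local orthonormal frame $\{e_1, \dots, e_{n-1}\}$ of $\Sigma$, hence $Ddf(N, e_i) = \rho\, g(N, e_i) - r_g(N, e_i) = 0$ by (\ref{eqn1-1-3}); expanding $D_N N$ in the frame $\{e_1, \dots, e_{n-1}, N\}$ and using that $|\n f|$ is constant on $\Sigma$ then forces $D_N N = 0$, so that $D_{\n f} N = 0$ and $D_{\n f}\n f$ is proportional to $N$.

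Next I would invoke the Codazzi-type symmetry (\ref{eqn2013-9-13-7}), specialized in (\ref{eqn2013-9-15-11}) to $D_X r_g(\n f, \n f) = D_{\n f} r_g(X, \n f)$. Writing $\a = |\n f|^{-2} r_g(\n f, \n f)$ and differentiating along $X \perp \n f$ (extended so as to remain orthogonal to $\n f$), the constancy of $|\n f|^2$ reduces $X(\a)$ to $|\n f|^{-2} X\big(r_g(\n f, \n f)\big)$; expanding this derivative, substituting the symmetry (\ref{eqn2013-9-15-11}), and then using both $D_{\n f} N = 0$ and the vanishing $r_g(Z, \n f) = 0$ for $Z \perp \n f$ makes every resulting term drop out, so that $X(\a) = 0$. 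This term-by-term cancellation, which succeeds only after the symmetry and the geodesic property are combined, is the crux of the argument.

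Finally, the constancy of $2 r_g(\n f, \n f) = \langle \n s_g, \n f\rangle$ follows formally: the stated identity is immediate from $r_g(\n f, \cdot) = \frac{1}{2} ds_g$, while $r_g(\n f, \n f) = \a\, |\n f|^2$ exhibits this quantity as a product of two functions already shown to be constant on $\Sigma$ (equivalently, $\langle \n s_g, \n f\rangle = 2\a |\n f|^2$ by (\ref{eqn2015-6-14-2-1})). Collecting these facts, all five quantities are constant along each level hypersurface given by $f$.
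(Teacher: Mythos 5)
Your proposal is correct and follows essentially the same route as the paper: constancy of $|\n f|^2$ via the soliton equation and $r_g(X,\n f)=0$, the geodesic property $D_NN=0$ from $Ddf(N,e_i)=0$, and then the Codazzi-type symmetry (\ref{eqn2013-9-15-11}) combined with $D_{\n f}N=0$ and $r_g(Z,\n f)=0$ for $Z\perp\n f$ to force $X(\a)=0$. The term-by-term cancellation you describe is exactly the computation carried out in the paper preceding the lemma.
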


As mentioned above, since $|\a| \le |r_g|$, $\a= r_g(N, N)$ 
can be extended as a $C^0$ function on the whole of $M$.
The following lemma, in particular, shows that the function $\a$ is equal to $\rho$ on the set ${\rm Crit}(f)$.

\begin{lem}\label{lem2-6-5}
Let $(M^n, g, f)$ be a gradient  Ricci soliton with weakly harmonic Weyl tensor.
Then, on the set  ${\rm Crit}(f)$, we have that $\a =  \r.$
\end{lem}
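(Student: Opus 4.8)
The plan is to extract from the soliton structure a single radial relation for $|\nabla f|^2$ and then read off the value of $\alpha$ at a critical point from the fact that $|\nabla f|^2$ is a nonnegative function of $f$ that vanishes precisely on ${\rm Crit}(f)$.

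First I would combine the identity $s_g + |\nabla f|^2 - 2\rho f = C$ from (\ref{eqn2-6-1}) with $\nabla s_g = 2\alpha \nabla f$ from (\ref{eqn2015-6-14-2-1}). Differentiating the former and substituting the latter gives, on $M \setminus {\rm Crit}(f)$, the relation $\nabla |\nabla f|^2 = 2(\rho - \alpha)\nabla f$; equivalently $Ddf(\nabla f, \cdot) = (\rho - \alpha)\,df$, so that $\rho - \alpha = Ddf(N,N)$ is the Hessian eigenvalue in the radial direction $N$. By Lemma~\ref{lem2013-11-22-1}, $s_g$, $|\nabla f|^2$ and $\alpha$ are all constant on the level hypersurfaces of $f$, hence each is a function of $f$ alone on the regular part. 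Writing $\phi(f) := |\nabla f|^2 \ge 0$, this relation reads $\phi'(f) = 2(\rho - \alpha)$.

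Next I would localize at a point $p \in {\rm Crit}(f)$ with critical value $c_0 = f(p)$. Since $\phi \ge 0$ and $\phi$ vanishes exactly at the critical values of $f$, the value $c_0$ is a minimum of $\phi$. If $c_0$ is interior to the range of $f$ (that is, regular values occur on both sides of $c_0$), then $\phi'(c_0) = 0$, and therefore $\alpha(p) = \rho$ immediately; using the continuity of the extension of $\alpha$ to $M$, this settles $\alpha(p) = \rho$ for every critical point lying over an interior critical value.

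The hard part will be the extreme critical values, that is, where $c_0$ is the global minimum (or maximum) of $f$ and hence an endpoint of the range. There the one-sided condition only yields the inequality $\alpha(p) \le \rho$ (respectively $\ge \rho$): indeed the Gaussian, for which $\phi(f) = 2\rho f$ and $\alpha \equiv 0$ at the minimum, shows that equality genuinely fails in the absence of further hypotheses. To exclude this I would use the standing assumption that $s_g$ is nonconstant together with the continuity of $\alpha$: writing $\alpha(p) = \rho - \lim_k Ddf(p_k)(N_k, N_k)$ along a sequence $p_k \to p$ of regular points, the limiting radial directions lie in the range of $Ddf(p)$, so that $\alpha(p) = \rho$ becomes equivalent to $Ddf(p) = 0$, i.e.\ $r_g(p) = \rho g$. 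The main obstacle is precisely to show that the degenerate, Gaussian- or cylinder-type configurations with $Ddf(p) \neq 0$ at an extreme critical point would force $s_g$ to be constant, contradicting the standing hypothesis; I expect to carry this out through the maximum principle and Liouville property for the $f$-Laplacian of Petersen--Wylie announced in the introduction.
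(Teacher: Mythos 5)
Your reduction to the radial relation $\nabla|\nabla f|^2 = 2(\rho-\alpha)\nabla f$, i.e.\ $\phi'(f)=2(\rho-\alpha)$, and your interior-critical-value argument do capture the right mechanism (a derivative of $|\nabla f|^2$, viewed radially, must vanish at a minimum). But the proposal has a genuine gap: the extreme critical values are never actually handled. You correctly isolate this as ``the hard part,'' observe via the Gaussian that the one-sided argument only yields $\alpha\le\rho$, and then merely announce that you \emph{expect} to close the case using the Petersen--Wylie maximum principle and Liouville property; no argument is given, and it is not at all clear how those global integral tools would produce the pointwise conclusion $\alpha=\rho$ at an extreme critical point. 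This missing case is not peripheral: in the paper the lemma is applied exactly at the maximum point $x_0$ of $f$ in the proof of Theorem A, i.e.\ at an extreme critical value, so a proof covering only interior critical values is useless for the intended application. (A smaller issue: treating $s_g$, $|\nabla f|^2$, $\alpha$ as functions of $f$ alone requires constancy on entire level sets, while Lemma~\ref{lem2013-11-22-1} is established along connected components; this is repairable locally but should be addressed.)

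For comparison, the paper argues with $|\nabla f|$ rather than $|\nabla f|^2$: from the soliton equation, $N(|\nabla f|)=\rho-\alpha$ on the regular set, so $d|\nabla f|$ has norm $|\rho-\alpha|$ there (unlike $d|\nabla f|^2=2(\rho-\alpha)df$, which degenerates automatically at critical points and carries no information). The paper then asserts that, since $\alpha$ extends continuously to $M$, the function $|\nabla f|$ extends as a $C^1$ function; every point of ${\rm Crit}(f)$ is a global minimum of $|\nabla f|$, so the differential vanishes there, forcing $\rho-\alpha=0$ at \emph{all} critical points, interior and extreme alike, in one stroke. Note, however, that your Gaussian example in fact exposes a weakness in that very step: for the Gaussian, $\alpha\equiv 0$ is continuous, yet $|\nabla f|=\rho|x|$ is not $C^1$ at the origin and $\alpha\ne\rho$ there. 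So continuity of $\alpha$ alone cannot justify the paper's $C^1$ claim, and the lemma genuinely leans on the section's standing hypothesis that $s_g$ is nonconstant (which excludes the Gaussian and the round cylinder). Your proposal diagnoses this correctly but, like the paper, never converts that hypothesis into the missing step; as it stands it is an incomplete proof.
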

\begin{proof}
From the Ricci soliton equation (\ref{eqn1-1-3}),  we have
\bea
N(|\n f|^2) = 2Ddf(N, \n f) = 2|\n f|(\rho - \a).\label{eqn2013-10-3-4-1}
\eea
Thus, 
\bea
N(|\n f|) = \rho - \a.\label{eqn2013-10-3-2-1}
\eea
Since $\a$ can be extended as a $C^0$ function on the whole of $M$, $|\n f|$ can be  
considered as a $C^1$ function
on the whole of $M$, including the critical set ${\rm Crit}(f)$ of $f$. Since $|\n f|$ attains its minimum on the set
${\rm Crit}(f)$, we have $N(|\n f|) = \rho - \a = 0$ on the set ${\rm Crit}(f)$ .
\end{proof}

\begin{lem}\label{lem2015-6-30-2}
Let $(M^n, g, f)$ be a complete gradient shrinking Ricci soliton.
If $f$ attains its (local) maximum at $x_0 \in M$, then
the scalar curvature $s_g$ also attains its (local) maximum at the point $x_0$.
\end{lem}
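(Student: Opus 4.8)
The plan is to bypass any maximum-principle or Laplacian-comparison machinery and instead read the conclusion directly off the conservation law (\ref{eqn2-6-1}),
\[
s_g + |\n f|^2 - 2\r f = C
\]
for a constant $C$. Solving for the scalar curvature yields the pointwise identity $s_g = C + 2\r f - |\n f|^2$, which exhibits $s_g$ as the sum of a term linear in $f$ and the nonpositive term $-|\n f|^2$. This is the entire idea: near a maximum of $f$ both of these contributions are controlled in the right direction.

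First I would record the data at the critical point. Since $f$ attains a local maximum at $x_0$, there is a neighborhood $U$ of $x_0$ on which $f \le f(x_0)$, and $x_0$ is necessarily a critical point, so $\n f(x_0) = 0$ and hence $|\n f(x_0)|^2 = 0$. Evaluating the identity at $x_0$ then gives $s_g(x_0) = C + 2\r f(x_0)$.

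Next I would estimate $s_g$ from above throughout $U$. For an arbitrary $x \in U$, discarding the nonnegative quantity $|\n f(x)|^2$ gives $s_g(x) \le C + 2\r f(x)$; invoking the shrinking hypothesis $\r > 0$ together with $f(x) \le f(x_0)$ then gives $C + 2\r f(x) \le C + 2\r f(x_0) = s_g(x_0)$. Combining these, $s_g(x) \le s_g(x_0)$ for every $x \in U$, which is exactly the assertion that $s_g$ attains a local maximum at $x_0$. The same computation with $U = M$ disposes of the global case.

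There is no substantive obstacle here; the only point demanding attention is that the positivity $\r > 0$ must be used precisely at the step converting $f(x) \le f(x_0)$ into a bound on the linear term, and it is exactly this sign that the shrinking assumption supplies. By contrast, an approach through the evolution identity (\ref{eqn1-5-2}) for $\D s_g$ would have to reconcile the competing signs of $\la \n s_g, \n f \ra$ and $-2|r_g|^2$, making such a comparison delicate; the conservation law (\ref{eqn2-6-1}) sidesteps this difficulty completely.
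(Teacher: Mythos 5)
Your proposal is correct and is essentially identical to the paper's own proof: both evaluate the conservation law $s_g + |\nabla f|^2 - 2\rho f = C$ at the critical point $x_0$ and at nearby points, using $\nabla f(x_0)=0$, the sign $\rho>0$, and the nonnegativity of $|\nabla f|^2$ to conclude $s_g(x) \le s_g(x_0)$. No differences of substance; the paper's argument is the same one-line chain of inequalities.
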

\begin{proof}
Recall that for a gradient shrinking Ricci soliton $(M,g, f)$,
\bea
s_g + |\n f|^2 - 2\rho f = C{\rm (constant)}.\label{eqn2013-12-15-1-1}
\eea
If  $f$ attains its local maximum at $x_0 \in M$, then ${\n f}(x_0) = 0$, and so at any point $x$ near $x_0$
 we have
$$
s_g(x_0) =  C + 2 \r f(x_0) \ge C+ 2 \r f(x) \ge C+ 2 \r f(x) - |\n f|(x) = s_g(x).
$$
\end{proof}

\begin{prop}\label{lem2015-7-14-1}
Let $(M^n, g, f)$ be a complete gradient shrinking Ricci soliton, 
and assume that $\delta \mathcal W(\cdot, \cdot, \nabla f) = 0$.
Then,
\bea
\Delta \a = N(\a)|df| + 2\r \a - \frac{2\a (s-\a)}{n-1}.\label{eqn2015-7-14-2}
\eea
\end{prop}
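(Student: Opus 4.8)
The plan is to extract $\Delta\a$ from the classical Bochner--Hamilton identity for the Ricci tensor of a gradient shrinking soliton, evaluated in the radial direction $N=\n f/|\n f|$, feeding in the eigenvalue splitting and the radial curvature already in hand.

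The first, purely first-order, ingredient is that $\a=r_g(N,N)$ differentiates exactly like the tensor it is built from. By Lemma~\ref{lem2015-7-14-10} (equivalently by the splitting (\ref{eqn3-25-2})) we have $r_g(\cdot,N)=\a\,\langle N,\cdot\rangle$; since $|N|\equiv1$ forces $\langle D_X N,N\rangle=0$, differentiating $\a=r_g(N,N)$ yields $X(\a)=(D_X r_g)(N,N)$ for every $X$. In particular the drift term is identified at once: $\langle\n\a,\n f\rangle=\n f(\a)=(D_{\n f}r_g)(N,N)=N(\a)\,|df|$, which is the first term on the right of (\ref{eqn2015-7-14-2}).

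Differentiating once more and tracing in an orthonormal frame $\{E_a\}=\{e_1,\dots,e_{n-1},N\}$ that is normal at the base point, I would obtain
\[
\Delta\a=(\Delta^D r_g)(N,N)+2\sum_a (D_{E_a}r_g)(D_{E_a}N,N),
\]
where $\Delta^D r_g=\sum_a D^2_{E_a,E_a}r_g$ is the connection (rough) Laplacian. Now insert the soliton Bochner identity $\Delta_f r_g=2\r\,r_g-2\,\mathcal R(r_g)$, with $\Delta_f=\Delta^D-D_{\n f}$ and $\mathcal R(r_g)(X,Y)=\sum_{k,l}R(X,E_k,Y,E_l)\,r_g(E_k,E_l)$, to rewrite $(\Delta^D r_g)(N,N)=(D_{\n f}r_g)(N,N)+2\r\a-2\,\mathcal R(r_g)(N,N)$. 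The first summand is again $N(\a)|df|$, the second is $2\r\a$, and the curvature term is evaluated from the data: $r_g(e_i,e_i)=\frac{s-\a}{n-1}$ by (\ref{eqn3-25-2}) and $R(N,e_i,N,e_i)=\frac{\a}{n-1}$ by Lemma~\ref{lem2015-7-14-10}, so that
\[
\mathcal R(r_g)(N,N)=\sum_{i=1}^{n-1}R(N,e_i,N,e_i)\,r_g(e_i,e_i)=\frac{\a(s-\a)}{n-1}.
\]
Collecting these three contributions produces precisely $N(\a)|df|+2\r\a-\frac{2\a(s-\a)}{n-1}$.

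The step I expect to be the main obstacle is the cross term $2\sum_a(D_{E_a}r_g)(D_{E_a}N,N)$ forced on us by the non-parallelism of $N$. Using $D_N N=0$ (established above) and the fact that the level sets are totally umbilic, $D_{e_i}N=\kappa\,e_i$ with $\kappa=\frac{1}{|df|}\big(\r-\frac{s-\a}{n-1}\big)$, it reduces to $2\kappa\sum_{i=1}^{n-1}(D_{e_i}r_g)(e_i,N)$, a tangential component of the divergence of $r_g$ along $f^{-1}(c)$. Controlling this quantity is the crux: one must combine the Codazzi identity (\ref{eqn15-5}) for $d^D r_g$ in the directions tangent to the level hypersurface with $\delta r_g=-\tfrac12 ds$ and the radial splitting (\ref{eqn3-25-2}), and it is here that the interplay between the weakly harmonic Weyl condition and the soliton equation is used most delicately.
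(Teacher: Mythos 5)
Your plan is incomplete exactly where you say it is, and the gap is not a technicality you can wave at: it is the entire content of the proposition. The paper's own proof never performs your computation at all --- it consists of quoting the scalar identity $\Delta \a = \la \n \a, \n f\ra + 2\r \a - 2\sum_{k,l}R_{nknl}R^{kl}$ directly from \cite{e-n-m}, and then evaluating the curvature term via Lemma~\ref{lem2015-7-14-10} and (\ref{eqn3-25-2}), which is the one step you also carry out (correctly). Everything else in your proposal --- the identification $X(\a)=(D_Xr_g)(N,N)$, the expansion $\Delta\a=(\Delta^D r_g)(N,N)+2\sum_a(D_{E_a}r_g)(D_{E_a}N,N)$, the soliton Bochner identity --- is a sound attempt to \emph{prove} the identity the paper merely cites, but you end by naming the cross term as ``the crux'' instead of computing it. A proof proposal that terminates at the hard step is not a proof.

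Worse, the tools you list for that step actually do evaluate the cross term, and it does not vanish. Write $\beta=\frac{s_g-\a}{n-1}$, so that (\ref{eqn3-25-2}) gives $r_g(\cdot,N)=\a\la\cdot\,,N\ra$ and $r_g(e_i,e_j)=\beta\delta_{ij}$, and recall your own $D_{e_i}N=\kappa e_i$ with $\kappa=\frac{1}{|df|}\left(\r-\beta\right)$. Since $r_g(e_i,N)\equiv0$ and $\la D_{e_i}e_i,N\ra=-\la e_i,D_{e_i}N\ra=-\kappa$, one gets
\[
(D_{e_i}r_g)(e_i,N)=-\,r_g(D_{e_i}e_i,N)-r_g(e_i,D_{e_i}N)=\kappa\a-\kappa\beta=\kappa(\a-\beta),
\]
so your cross term equals $2\kappa\sum_i(D_{e_i}r_g)(e_i,N)=2(n-1)\kappa^2(\a-\beta)$. (Equivalently, via $\delta r_g=-\frac12 ds_g$ and $\n s_g=2\a\n f$ one finds $\sum_i(D_{e_i}r_g)(e_i,N)=\a|df|-N(\a)$, which agrees with the above precisely because of the paper's first-order identity (\ref{eqn2015-12-31-3}); the computation is internally consistent.) This term vanishes only where $\a=\beta$ (i.e.\ $r_g=\frac{s_g}{n}g$) or $\r=\beta$, neither of which is available at this stage --- rigidity, which would force $\a\equiv0$, is the conclusion, not a hypothesis. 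So your route, carried to completion, yields $\Delta\a=N(\a)|df|+2\r\a-\frac{2\a(s_g-\a)}{n-1}+2(n-1)\kappa^2(\a-\beta)$, an identity with an extra term, rather than the stated one. Either that term vanishes for a reason lying outside the Codazzi/Bianchi/splitting toolkit you invoke, or the scalar formula imported from \cite{e-n-m} conceals the same frame-derivative issue --- in which case your careful expansion is pointing at a genuine defect in the proposition itself. You must resolve this discrepancy (starting by checking the exact statement proved in \cite{e-n-m} and how it treats derivatives of $N$); as written, your proposal cannot be closed into a proof of the formula as stated.
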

\begin{proof}
Let $\{e_1, \cdots, e_{n-1}, N = \frac{\n f}{|\n f|}\}$ be a local frame, and
let $R_{ij} = r_g(e_i, e_j)$ so that $\a:= R_{nn} = r_g(N, N)$. It has been shown  in \cite{e-n-m} that
$$
\Delta \a = \langle \n \a, \n f\rangle + 2\r \a - 2 \sum_{k, l}R_{nknl} R^{kl}.
$$
Finally, from Lemma~\ref{lem2015-7-14-10}  it is easy to see that
$$
 \sum_{k, l}R_{nknl} R^{kl} = \frac{\a}{n-1}\sum_{k=1}^{n-1} R_{kk}
 = \frac{\a}{n-1}(s_g - \a).
 $$
\end{proof}

\section{Proof of Main Theorems}

In this section, we shall prove  our main results.

\begin{thm}
 Let $(M^n, g, f)$ be a
compact gradient shrinking Ricci soliton with weakly harmonic Weyl tensor.  Then, $(M, g)$ is Einstein.
\end{thm}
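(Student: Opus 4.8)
The plan is to prove that $s_g$ must be constant; once this is known, Theorem~\ref{thm2013-12-9-1} (in the compact case constant scalar curvature already forces the metric to be Einstein) finishes the proof. I argue by contradiction, assuming $s_g$ is non-constant. Since $M$ is closed, $f$ attains its maximum and minimum, so ${\rm Crit}(f)\neq\emptyset$, and on this set Lemma~\ref{lem2-6-5} gives $\alpha=\rho$. The two main inputs will be Proposition~\ref{lem2015-7-14-1} and the fact (Lemma~\ref{lem2015-6-30-2}) that $s_g$ attains its maximum at the point $x_0$ where $f$ is maximal.

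First I would rewrite Proposition~\ref{lem2015-7-14-1}. Because $\alpha$ is constant on the level sets of $f$ (Lemma~\ref{lem2013-11-22-1}), its gradient is parallel to $\nabla f$, whence $N(\alpha)|df|=\langle\nabla\alpha,\nabla f\rangle$ and the proposition becomes $\Delta\alpha-\langle\nabla\alpha,\nabla f\rangle=2\alpha(\rho-\lambda)$, where $\lambda=\frac{s_g-\alpha}{n-1}$ is the $(n-1)$-fold Ricci eigenvalue. Next I observe that $\alpha$ cannot remain $\le\rho$: viewing $u:=|\nabla f|^2$ as a function of $f$, the soliton equation gives $\frac{d}{df}u=2(\rho-\alpha)$, while $u$ vanishes at the two extreme critical values of $f$ and is positive in between. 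If $\alpha\le\rho$ everywhere, then $u$ is nondecreasing in $f$ with $u=0$ at the top value, forcing $u\equiv0$ and hence $s_g$ constant, a contradiction. Therefore $\max_M\alpha>\rho$, and since $\alpha=\rho$ on ${\rm Crit}(f)$ this maximum is attained on a regular level set, where $\alpha$ is smooth. Applying the maximum principle to the rewritten equation there (with $\alpha>\rho>0$) yields $\lambda\ge\rho$ at that level set, so $s_g=\alpha+(n-1)\lambda>n\rho$ there; in particular $\max_M s_g>n\rho$.

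Finally I would contradict this by evaluating $s_g$ at $x_0$. On the regular set the Ricci tensor has the two-eigenvalue form $r_g=\alpha\,N^\flat\otimes N^\flat+\lambda\,(g-N^\flat\otimes N^\flat)$; as one approaches $x_0$ the unit normal $N=\nabla f/|\nabla f|$ has no limit, since it sweeps out all directions near a maximum of $f$, so by continuity of $r_g$ the two eigenvalues must coincide at $x_0$, giving $r_g(x_0)=\rho\,g$ and $s_g(x_0)=n\rho$. As $x_0$ is the maximum of $s_g$, this forces $\max_M s_g=n\rho$, contradicting the strict inequality above. Hence $s_g$ is constant and $(M,g)$ is Einstein. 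The step I expect to be the main obstacle is this last one: making rigorous the isotropy of $r_g$ at $x_0$ when the maximum of $f$ is a degenerate critical point, where the ``$N$ has no limit'' reasoning must be replaced by a careful limiting argument along the regular set approaching $x_0$.
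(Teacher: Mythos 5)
Your steps 2--4 are correct and genuinely different from the paper's argument: the gradient--flow/ODE argument showing that $\alpha\le\rho$ everywhere would force $|\nabla f|\equiv 0$, and the maximum principle applied to $\Delta_f\alpha=2\alpha\bigl(\rho-\tfrac{s_g-\alpha}{n-1}\bigr)$ (Proposition~\ref{lem2015-7-14-1}, which the paper states but never actually uses in its own proofs), are both sound and do yield $\max_M s_g>n\rho$ under your contradiction hypothesis. The genuine gap is your final step, and it is not a removable technicality --- it is exactly the crux of the theorem. What you need there is $r_g(x_0)=\rho g$, equivalently $s_g(x_0)= n\rho$ (note that at a maximum of $f$ one has $Ddf(x_0)\le 0$, hence $\Delta f(x_0)\le 0$, hence $s_g(x_0)\ge n\rho$ automatically; the whole content is the reverse inequality). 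Your mechanism --- ``$N$ sweeps out all directions near a maximum, so continuity of $r_g$ forces the two eigenvalues to coincide at $x_0$'' --- cannot be justified by continuity alone. The round cylinder ${\Bbb S}^{n-1}\times{\Bbb R}$, a shrinking soliton with $\delta\mathcal W=0$ and $f=\tfrac{\rho}{2}t^2$, shows why: at its critical set ${\Bbb S}^{n-1}\times\{0\}$ the field $N$ accumulates only on the axis $\pm\partial_t$, and the two Ricci eigenvalues stay distinct ($\alpha\to 0$, $\lambda=\rho$) up to and including the critical points (incidentally, this also shows that the continuous extension of $\alpha$ need not equal $\rho$ at general critical points, so any use of Lemma~\ref{lem2-6-5} at $x_0$ is tied to $x_0$ being a maximum). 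Worse, your argument is self-defeating in the one case where the sweeping claim is easy: at a nondegenerate maximum, $Ddf(x_0)$ is negative definite, while your conclusion would give $Ddf(x_0)=0$; so nondegenerate maxima cannot occur at all, and the only case that can actually arise is the degenerate one, precisely where your argument is silent.

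The paper closes this crux by an algebraic identity rather than by a limiting analysis of $N$. Writing $\tfrac12\Delta s_g=\langle\nabla\alpha,\nabla f\rangle+\alpha(n\rho-s_g)$ (from $\nabla s_g=2\alpha\nabla f$, equation (\ref{eqn2015-12-31-1})) and $\tfrac12\Delta s_g=\alpha|\nabla f|^2+s_g\bigl(\rho-\tfrac{s_g}{n}\bigr)-\bigl|r_g-\tfrac{s_g}{n}g\bigr|^2$ (a rewriting of (\ref{eqn1-5-2})), and subtracting, one obtains (\ref{eqn2015-12-31-3}); evaluating at $x_0$, where $\nabla f=0$ and $\alpha=\rho$, the identity collapses to the sum of squares $n\bigl(\rho-\tfrac{s_g}{n}\bigr)^2+\bigl|r_g-\tfrac{s_g}{n}g\bigr|^2=0$, which forces $r_g(x_0)=\tfrac{s_g(x_0)}{n}g=\rho g$. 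That is the idea missing from your write-up. Note also that once it is available, your steps 2--4 become superfluous: $r_g(x_0)=\rho g$ gives $\Delta f(x_0)=0$, hence $s_g\le s_g(x_0)=n\rho$ everywhere by Lemma~\ref{lem2015-6-30-2}, hence $\Delta f=n\rho-s_g\ge 0$, and a subharmonic function on a compact manifold is constant, so $(M,g)$ is Einstein --- which is the paper's entire proof. So the machinery you built, while correct, does not circumvent the single step it was meant to support; to complete your proof you must replace the ``sweeping'' paragraph by the identity argument above (or an equivalent pointwise argument at $x_0$).
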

\begin{proof}
 Let
$$
\max_{x\in M} f(x) = f(x_0).
$$
By Lemma~\ref{lem2015-6-30-2},  the scalar curvature $s_g$
also attains its maximum at $x_0$, i.e.,
$$
s_g(x_0) = \max_M s_g.
$$
Thus, if  $\Delta f(x_0) = 0$, 
then $s_g \le n \r$, because $\Delta f = n\r - s_g$. 
This shows that $f$ is a subharmonic function, and so it
must be constant. Hence,  $(M, g)$ is Einstein.

We claim that 
$$
\Delta f(x_0) = 0.
$$
From $\n s_g = 2\a \n f$, we have
\be
\frac{1}{2} \Delta s_g =  \langle \n \a, \n f\rangle + \a \Delta f
=  \langle \n \a, \n f\rangle + \a  (n\rho - s_g).\label{eqn2015-12-31-1}
\ee
The equation (\ref{eqn1-5-2}) can be rewritten as
\be
\frac{1}{2} \Delta s_g &=& \a |\n f|^2  + s_g\left(\rho - \frac{s_g}{n}\right) - \left|r_g - \frac{s_g}{n}g\right|^2. \label{eqn2015-12-31-2}
\ee
Therefore,
\be
\a|\n f|^2 = n \left(\a - \frac{s_g}{n}\right)\left(\rho - \frac{s_g}{n}\right) + 
 \left|r_g - \frac{s_g}{n}g\right|^2 + \langle \n \a, \n f\rangle.\label{eqn2015-12-31-3}
\ee
Thus, at the maximum point $x_0$ of $f$, we have from by  Lemma~\ref{lem2-6-5} that
$$
r_g = \frac{s_g}{n}g
$$
and 
$$
\a = \rho = \frac{s_g}{n}.
$$
Thus, 
$$
r_g = \frac{s_g}{n}g = \rho g
$$
at the point $x_0$, and so $Ddf_{x_0} = 0$ by the Ricci  soliton equation (\ref{eqn1-1-3}). 
Hence, $\Delta f(x_0)= 0$ and so $s_g$ is constant.

\end{proof}

Let  $(M^n, g, f)$ be a  noncompact gradient shrinking Ricci soliton.
 To prove Theorem B, we need the following Liouville
property for $f$-Laplacian functions, which are shown by  Petersen and
Wylie(\cite{p-w}).  The $f$-Laplacian of a function $u$ on $M$ is defined by
$$
\Delta_f u = \Delta u - \langle \n f, \n u\rangle.
$$

\begin{lem}[\cite{p-w}]\label{lem2015-5-30-1}
Any nonnegative function $u$ with $\Delta_f u \ge 0$ that satisfies
\be \lim_{r\to \infty}\left(\frac{1}{r^2}\int_{B(p, r)} u^k e^{-f}\,
dv_g \right) = 0\label{eqn2015-5-30-3} \ee for some $k >1$ is
constant.
\end{lem}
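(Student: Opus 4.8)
The plan is to treat this as a weighted (Bakry--\'Emery) Liouville theorem and to prove it by a Caccioppoli-type cutoff argument adapted to the measure $d\mu = e^{-f}\,dv_g$. The starting point is the observation that $\Delta_f$ is in divergence form with respect to this measure: a direct computation gives $e^{-f}\Delta_f u = \operatorname{div}\!\left(e^{-f}\nabla u\right)$, so that for any compactly supported test function $\psi$ one has the integration-by-parts identity $\int_M (\Delta_f u)\,\psi\,d\mu = -\int_M \langle \nabla u,\nabla\psi\rangle\,d\mu$, with no boundary contribution. In particular $\Delta_f$ is self-adjoint on $L^2(d\mu)$, which is what makes the weighted energy estimate below work cleanly.

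Next I would fix a base point $p$ and choose, for each $r>0$, a cutoff $\phi=\phi_r$ with $\phi\equiv 1$ on $B(p,r)$, $\operatorname{supp}\phi\subset B(p,2r)$, $0\le\phi\le1$, and $|\nabla\phi|\le C/r$; such cutoffs exist on any complete manifold by mollifying the distance function. Testing the hypothesis $\Delta_f u\ge 0$ against the nonnegative function $u^{k-1}\phi^2$ and using the identity above yields
$$ (k-1)\int_M u^{k-2}\phi^2|\nabla u|^2\,d\mu \;\le\; -2\int_M u^{k-1}\phi\,\langle\nabla u,\nabla\phi\rangle\,d\mu. $$
Applying Cauchy--Schwarz followed by Young's inequality to the right-hand side, with a small parameter $\epsilon<k-1$ chosen to absorb the term $\epsilon\int_M u^{k-2}\phi^2|\nabla u|^2\,d\mu$ into the left-hand side, I obtain an estimate of the form
$$ \int_{B(p,r)} u^{k-2}|\nabla u|^2\,d\mu \;\le\; \frac{C'}{r^2}\int_{B(p,2r)} u^{k}\,d\mu. $$
This is exactly where the hypothesis is used: the growth condition~(\ref{eqn2015-5-30-3}) forces the right-hand side to $0$ as $r\to\infty$, so $\int_M u^{k-2}|\nabla u|^2\,d\mu = 0$, whence $\nabla u\equiv 0$ and $u$ is constant on the connected manifold $M$. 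Equivalently, setting $w=u^{k/2}$ turns the left-hand integrand into a multiple of $|\nabla w|^2$ and shows $w$, hence $u$, is constant, which avoids discussing the locus $\{u=0\}$.

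The main obstacle I anticipate is the regularity/integrability issue hidden in the factor $u^{k-2}$ when $1<k<2$, where this weight is singular along $\{u=0\}$. To make the integration by parts legitimate in that range I would first run the argument with $u$ replaced by $u_\delta=u+\delta$ (so that $u_\delta^{k-2}$ is bounded and all manipulations are justified), derive the estimate uniformly in $\delta$, and only then let $\delta\to0$ by monotone or dominated convergence; the substitution $w=u^{k/2}$ mentioned above is the clean way to package this. A secondary technical point is the construction of the cutoffs $\phi_r$ with the bound $|\nabla\phi_r|\le C/r$, which is standard but relies on completeness of $(M,g)$ through the properness of the distance function. Everything else---expanding $\nabla(u^{k-1}\phi^2)$, the Young absorption, and passing to the limit---is routine once these two points are handled.
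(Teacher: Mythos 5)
The paper does not actually prove this lemma---it is quoted verbatim from Petersen--Wylie \cite{p-w}---so there is no internal proof to compare against; your Caccioppoli-type cutoff argument is the standard proof of this weighted Yau--Karp Liouville theorem and is essentially the argument used in the cited source. Your proof is correct, including your handling of the two technical points, with one caveat about the regularization for $1<k<2$: you must send $\delta\to 0$ at fixed $r$ (recovering the ball estimate for $u$ by monotone convergence) \emph{before} letting $r\to\infty$, because for fixed $\delta$ the right-hand side acquires a term of size $\delta^k r^{-2}\int_{B(p,2r)}e^{-f}\,dv_g$, which the hypothesis (\ref{eqn2015-5-30-3}) does not control on a general complete weighted manifold; read in that order---as your wording ``derive the estimate uniformly in $\delta$, and only then let $\delta\to0$'' suggests---the argument is complete.
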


Using Lemma~\ref{lem2015-5-30-1}, we can prove the following
corollary.

\begin{cor}[\cite{p-w}]\label{lem2015-5-30-2}
Let $(M, g, f)$ be a complete gradient Ricci soliton. For a function
$u : M \to {\Bbb R}$, let
$$
\Omega_{u, C} := \{x\in M\,:\, u(x) \ge C\}.
$$
If $\Delta_f u \ge 0$ on $\Omega$ and satisfies
(\ref{eqn2015-5-30-3}),
 then $u$ is either constant or $u \le C$.
\end{cor}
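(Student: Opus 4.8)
The plan is to reduce the statement to Lemma~\ref{lem2015-5-30-1} by applying that lemma to the truncation $w := (u-C)^+ = \max\{u-C, 0\}$, which is a nonnegative function on $M$. The idea is that the three hypotheses of Lemma~\ref{lem2015-5-30-1} (nonnegativity, $f$-subharmonicity, and the weighted growth bound (\ref{eqn2015-5-30-3})) all pass from $u$ to $w$, after which the conclusion that $w$ is constant immediately yields the desired dichotomy.

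The substantive point is to verify $\Delta_f w \ge 0$ on all of $M$ in the weak sense. On the open set $\{u > C\} \subset \Omega$ we have $w = u - C$, so $\Delta_f w = \Delta_f u \ge 0$ there by hypothesis, while on the open set $\{u < C\}$ we have $w \equiv 0$. The only issue is the interface $\{u = C\}$. Since $\Delta_f$ is self-adjoint with respect to the weighted measure $e^{-f}\,dv_g$ and $\n w = \mathbf{1}_{\{u>C\}}\,\n u$, for any nonnegative test function $\varphi$ with compact support one has
\bea
\int_M w\,(\Delta_f \varphi)\, e^{-f}\, dv_g &=& -\int_{\{u>C\}} \langle \n u, \n \varphi\rangle\, e^{-f}\, dv_g.
\eea
Integrating by parts over the open set $\{u>C\}$ with the weighted divergence theorem produces the bulk term $\int_{\{u>C\}}(\Delta_f u)\,\varphi\,e^{-f}\,dv_g \ge 0$ together with a boundary term along $\{u=C\}$ whose integrand carries $\langle \n u, \nu\rangle \le 0$ (the outward normal $\nu$ of $\{u>C\}$ points toward decreasing $u$), so that this boundary contribution is also nonnegative. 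Hence $\int_M w\,(\Delta_f \varphi)\,e^{-f}\,dv_g \ge 0$, i.e. $\Delta_f w \ge 0$ weakly; equivalently, $w$ is the maximum of the $f$-subharmonic function $u-C$ and the $f$-harmonic function $0$, and the maximum of subsolutions is a subsolution.

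Next I would transfer the growth condition. Where $w > 0$ one has $w = u - C$, so in the case $C \ge 0$ relevant to the applications $0 \le w \le u$ and the bound is immediate; in general $w \le |u| + |C|$ gives $w^k \le 2^{k-1}(|u|^k + |C|^k)$, and combined with the hypothesis (\ref{eqn2015-5-30-3}) for $u$ this yields
$$
\lim_{r\to\infty}\frac{1}{r^2}\int_{B(p,r)} w^k\, e^{-f}\, dv_g = 0,
$$
the constant term contributing only a multiple of $\frac{1}{r^2}\int_{B(p,r)} e^{-f}\,dv_g$. Lemma~\ref{lem2015-5-30-1} then forces $w$ to equal a constant $c \ge 0$. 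If $c = 0$, then $(u-C)^+ \equiv 0$, i.e. $u \le C$ on all of $M$. If $c > 0$, then $(u-C)^+ \equiv c$ is impossible unless $u - C > 0$ everywhere, in which case $u \equiv C + c$ is constant. This is precisely the claimed alternative.

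I expect the main obstacle to be the rigorous justification of $\Delta_f w \ge 0$ across the level set $\{u=C\}$, since $u-C$ is $f$-subharmonic only on the closed set $\Omega$ rather than on an open neighborhood of the interface. This is handled either by the distributional computation above, invoking Sard's theorem so that $\{u=C\}$ is a smooth hypersurface for the relevant value (or an approximation argument when it is not), or by directly quoting the standard fact that the pointwise maximum of subsolutions of $\Delta_f$ is again a weak subsolution.
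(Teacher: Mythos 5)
The paper offers no proof of this corollary at all---it is quoted from \cite{p-w} and merely stated to follow from Lemma~\ref{lem2015-5-30-1}---and your truncation argument is precisely the standard derivation, so in essence you have reconstructed the intended proof. The core step is sound: $w=(u-C)^+$ is nonnegative and is a weak $f$-subsolution. Your Sard-plus-boundary-sign justification works, though a cleaner route that avoids all regularity questions about the level set $\{u=C\}$ is to set $w_\epsilon=\chi_\epsilon(u-C)$ for smooth, convex, nondecreasing $\chi_\epsilon$ vanishing on $(-\infty,0]$ and converging uniformly to $t\mapsto t^+$; then $\Delta_f w_\epsilon=\chi_\epsilon'(u-C)\,\Delta_f u+\chi_\epsilon''(u-C)\,|\nabla u|^2\ge 0$ pointwise, since the first term is supported in $\Omega_{u,C}$, and one lets $\epsilon\to 0$. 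One should also state explicitly that Lemma~\ref{lem2015-5-30-1} is being applied to a Lipschitz weak subsolution rather than a smooth function; this is harmless because its proof is a cutoff integration-by-parts argument, but it deserves a sentence. Your endgame (constant $c=0$ gives $u\le C$; constant $c>0$ forces $u\equiv C+c$) is correct.

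The one genuine gap is the transfer of the growth condition when $C<0$. You bound $w^k\le 2^{k-1}\left(|u|^k+|C|^k\right)$ and assert that the constant term contributes ``only a multiple of $\frac{1}{r^2}\int_{B(p,r)}e^{-f}\,dv_g$,'' tacitly assuming this quantity tends to $0$. That is not a consequence of the hypotheses as stated, which allow an arbitrary complete gradient Ricci soliton: on Euclidean ${\Bbb R}^n$ with $f\equiv 0$ (a trivial steady soliton) one has $\frac{1}{r^2}\int_{B(p,r)}e^{-f}\,dv_g\sim r^{n-2}\to\infty$ for $n\ge 3$. So for $C<0$ the argument needs an extra input: either sub-quadratic growth of the weighted volume---which does hold on shrinking solitons, where the weighted volume is in fact finite by the Cao--Zhou volume estimates---or one must read (\ref{eqn2015-5-30-3}) as a hypothesis on $(u-C)^+$ rather than on $u$. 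In this paper the issue is invisible, because the corollary is only invoked in Theorem~\ref{main02} with $u=e^{-s_g}$ and $C=0$, where your clean case $0\le w\le |u|$ applies and no constant appears; but as a proof of the corollary in the stated generality, this step should be flagged and repaired.
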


\begin{thm}\label{main02}
Let $(M^n, g, f)$ be a complete noncompact gradient shrinking Ricci soliton with weakly harmonic Weyl tensor.
Then, $(M, g)$ is rigid.
\end{thm}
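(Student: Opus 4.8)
The plan is to reduce the theorem to the constant scalar curvature case, which is already settled by Theorem~\ref{thm2013-12-9-1}: by that theorem, for a complete shrinking soliton with $\d\mathcal W(\cdot,\cdot,\n f)=0$ it suffices to prove that $s_g$ is constant. It is worth noting at the outset that, under our hypothesis, constancy of $s_g$ is in fact \emph{equivalent} to the radial flatness in Theorem~\ref{thm2-6-2}: by Lemma~\ref{lem2015-7-14-10} the only possibly nonvanishing radial sectional curvature is $R(X,\n f,X,\n f)=\frac{\a|\n f|^2}{n-1}$, while by (\ref{eqn2015-6-14-2-1}) we have $\n s_g=2\a\n f$, so both conditions hold exactly when $\a\equiv 0$ on $M\setminus{\rm Crit}(f)$. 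Hence the whole problem reduces to showing $\a\equiv 0$ off the critical set.

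First I would record the two governing $f$-Laplacian identities, where $\Delta_f u=\Delta u-\langle\n f,\n u\rangle$. Using $\n s_g=2\a\n f$ in (\ref{eqn1-5-2}) together with the two eigenvalue decomposition of $r_g$ from the Remark following Lemma~\ref{lem2015-7-14-10}, one obtains
\bea
\Delta_f s_g = 2\r s_g-2|r_g|^2 = 2\r s_g-2\a^2-\frac{2(s_g-\a)^2}{n-1},
\eea
and, rewriting $N(\a)|df|=\langle\n\a,\n f\rangle$ in Proposition~\ref{lem2015-7-14-1},
\bea
\Delta_f \a = 2\a\left(\r-\frac{s_g-\a}{n-1}\right).
\eea
By Lemma~\ref{lem2013-11-22-1} the functions $s_g$, $\a$ and $|\n f|^2$ are all constant on the level hypersurfaces of $f$, so $\n\a\parallel\n f$ and each of these quantities is, along the flow of $\n f$, a function of $f$ alone. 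This is the structural simplification that turns the two identities into a tractable, essentially one variable system.

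The core of the argument is a Liouville step based on Corollary~\ref{lem2015-5-30-2}. The natural nonnegative test function is $u=\a^2$, for which the diffusion identity gives
\bea
\Delta_f(\a^2) = 2\a\,\Delta_f\a + 2|\n\a|^2 = 4\a^2\left(\r-\frac{s_g-\a}{n-1}\right)+2|\n\a|^2.
\eea
On a region where the orthogonal Hessian $Ddf(e_i,e_i)=\r-\frac{s_g-\a}{n-1}$ is nonnegative, $\a^2$ is $f$-subharmonic. Since a complete shrinking soliton has at most polynomial curvature growth and finite weighted volume $\int_M e^{-f}\,dv_g<\infty$, the bound $|\a|\le|r_g|$ makes the growth hypothesis (\ref{eqn2015-5-30-3}) hold, so Corollary~\ref{lem2015-5-30-2} applies on the super-level sets of $\a^2$ and forces $\a^2$ to be constant. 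A nonzero constant is then excluded by comparing the two displayed identities: if $\a\equiv c\neq 0$, then $s_g$ is an affine function of $f$ along $\n f$, so the left side of the first identity is affine in $s_g$ while its right side is genuinely quadratic in $s_g$; since $s_g$ is nonconstant this is impossible. Therefore $\a\equiv 0$ off ${\rm Crit}(f)$, $s_g$ is constant, and Theorem~\ref{thm2013-12-9-1} yields rigidity.

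The main obstacle I expect is securing a definite sign of $\Delta_f$ on a large enough set: neither $\Delta_f s_g$ nor $\Delta_f\a$ is sign definite in general, and making $\a^2$ $f$-subharmonic requires controlling the sign of the orthogonal Hessian $\r-\frac{s_g-\a}{n-1}$. Near ${\rm Crit}(f)$ this sign is favorable, since there $\a=\r$ by Lemma~\ref{lem2-6-5} and $s_g\le n\r$ at a minimum of $f$ (where $\Delta f\ge 0$), giving $\frac{s_g-\a}{n-1}\le\r$; the difficulty is to propagate this control to the super-level sets on which Corollary~\ref{lem2015-5-30-2} is invoked, and to handle the degeneration of the level-set foliation across ${\rm Crit}(f)$, where one must work with the $C^0$ extension of $\a$ supplied before Lemma~\ref{lem2-6-5}.
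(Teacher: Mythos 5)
Your reduction to constant scalar curvature via Theorem~\ref{thm2013-12-9-1} is the same first move as the paper's, and both of your $f$-Laplacian identities are correct. But the core Liouville step has a genuine gap, and it is exactly the one you flag at the end: Corollary~\ref{lem2015-5-30-2} applied to $u=\a^2$ requires $\Delta_f (\a^2)\ge 0$ on the super-level sets $\{\a^2\ge C\}$, i.e.\ the sign condition $\r-\frac{s_g-\a}{n-1}\ge 0$ there, and nothing in your argument supplies it. The favorable sign you get near ${\rm Crit}(f)$ (where $\a=\r$) does not propagate: the super-level sets of $\a^2$ need not lie anywhere near ${\rm Crit}(f)$, and $s_g$ can a priori be large on them. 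Since $\a\equiv 0$ is precisely the conclusion you are after, deferring this sign control leaves the proof incomplete at its decisive step. A second, independent defect is the growth hypothesis (\ref{eqn2015-5-30-3}): you justify it for $u=\a^2$ by asserting that complete shrinking solitons have ``at most polynomial curvature growth,'' which is not an available fact (and is nowhere needed in the paper); the bound $|\a|\le |r_g|$ is of no use here because $|r_g|$ itself has no pointwise growth control. (Your exclusion of a nonzero constant value of $\a$ by comparing the affine and quadratic dependence on $s_g$ is fine, but it is moot given the gap.)

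The paper avoids both problems by choosing test functions whose sign and growth are automatic, via a dichotomy on the set $\Omega=\{x\in M : \r\Delta f-|Ddf|^2<0\}$. If $\Omega=\emptyset$, it takes $u=2\r f-|df|^2$, which equals $s_g$ up to an additive constant; then $\Delta_f u=2\left(\r\Delta f-|Ddf|^2\right)\ge 0$ everywhere, and (\ref{eqn2015-5-30-3}) holds because $0\le s_g$ and $|df|^2\le 2\r f+C$ with $f$ of at most quadratic growth, so Lemma~\ref{lem2015-5-30-1} forces $s_g$ to be constant. If $\Omega\ne\emptyset$, it takes $u=e^{-s_g}$, for which $\Delta_f(e^{-s_g})=2e^{-s_g}\left(|Ddf|^2-\r\Delta f+2\a^2|df|^2\right)\ge 0$ on $\Omega$, while $0<e^{-s_g}\le 1$ (since $s_g\ge 0$) makes (\ref{eqn2015-5-30-3}) trivial; the Liouville alternative of Corollary~\ref{lem2015-5-30-2} then forces $e^{-s_g}$, hence $s_g$, to be constant. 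To salvage your route you would need either a proof of the sign $\r-\frac{s_g-\a}{n-1}\ge 0$ on the relevant sets, or a test function with built-in sign and boundedness --- which is in effect what the paper's choice of $2\r f-|df|^2$ and $e^{-s_g}$ accomplishes.
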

\begin{proof}
Let
$$
\Omega := \{x\in M\,:\, \r \Delta f - |Ddf|^2 <0\}.
$$
First, assume that  $\Omega = \emptyset$.
Let $u:= 2\r f - |df|^2$. Using the identity (\ref{eqn2015-6-14-2-1}), we can easily compute the following: 
\be
\Delta_f u = \Delta_f (2\r f-|df|^2) = 2\left(\r \Delta f - |Ddf|^2\right).\label{eqn3-25-4}
\ee
Since $\Omega = \emptyset$, we have
$$
\Delta_f u \ge 0.
$$
We can also easily see that the function $u= 2 \r f - |df|^2$
  satisfies (\ref{eqn2015-5-30-3}) for some $k>1$ (in fact, for any $k>1$). Thus, by
Lemma~\ref{lem2015-5-30-1}, $u = 2\r f - |df|^2$ is constant. Since
it is well known that $2\r f- |df|^2 = s_g +C$, it follows that the scalar curvature
$s_g$ must be constant. Therefore, $(M, g, f)$ is rigid by Theorem~\ref{thm2013-12-9-1}.

Now, assume that  $\Omega \ne \emptyset$.
In this case, we can compute
$$
\Delta_f (e^{-s_g}) = 2e^{-s_g}\left(|Ddf|^2 - \r \Delta f + 2\a^2
|df|^2\right).
$$
Therefore, $\Delta_f (e^{-s_g}) \ge 0$ on the set $\Omega$,
 and the function $e^{-s_g}$ satisfies (\ref{eqn2015-5-30-3})  obviously. Note that on
the set $\Omega$, it trivially holds that $e^{-s_g} > 0$. 
By Lemma~\ref{lem2015-5-30-2}, $e^{-s_g}$ is either constant or $e^{-s_g}
\le 0$. Because the latter condition is impossible, $e^{-s_g}$ must be constant,
and so is $s_g$. Hence, $(M, g,f)$ is also rigid in this case.

\end{proof}

\begin{remark}
{\rm 
When a gradient shrinking Ricci soliton $(M, g, f)$ does not satisfy the weakly harmonic Weyl 
condition (\ref{eqn3-25-6}),  we cannot be sure whether  (\ref{eqn3-25-4}) holds or not. In fact, there are gradient shrinking Ricci solitons whose scalar curvatures are not constant.
}
\end{remark}

\end{document}